\documentclass[twoside,a4paper]{amsart}
\usepackage{style}
\usepackage{mathabx}
\usepackage{tikz}
\usepackage{graphicx}
\usepackage{xcolor}
\usepackage{pdfsync}

\usepackage{xcolor}
\definecolor{webgreen}{rgb}{0,.5,0}
\definecolor{webbrown}{rgb}{.6,0,0}
\definecolor{RoyalBlue}{cmyk}{1, 0.50, 0, 0}
\usepackage[colorlinks=true, breaklinks=true, urlcolor=webbrown, linkcolor=RoyalBlue, citecolor=webgreen,backref=page]{hyperref}

\usepackage{courier} % tt
\normalfont
\usepackage{epsfig, graphicx, subfigure}
\usepackage{amsmath, amssymb}

\let\Re=\undefined
\DeclareMathOperator{\Re}{Re}
\let\Im=\undefined
\DeclareMathOperator{\Im}{Im}

\def\ge{\geqslant}
\def\le{\leqslant}

\newtheorem{theorem}{Theorem}[section]

\newtheorem{corollary}[theorem]{Corollary}
\newtheorem{lemma}[theorem]{Lemma}

\theoremstyle{remark}

\definecolor{darkbrown}{RGB}{150,1,33} 
\numberwithin{equation}{section}
\usepackage{hyperref}
\begin{document}

\title[The strong version \ldots \ldots]{
The strong version of Nonlinear Carleson Conjecture fails }

\begin{abstract} In the context of the Dirac equation  with $L^2(\R)$-potential, we study the Jost solutions and prove that the maximal function associated with the argument of the transmission coefficient is unbounded. We also prove that the strong version of the Nonlinear Carleson Conjecture (NCC) fails for Dirac equations and Krein systems.
\end{abstract} \vspace{1cm}

\author[Sergey A. Denisov]{Sergey A. Denisov}
\address{Department of Mathematics, University of Wisconsin-Madison, 480 Lincoln Dr., Madison, WI 53706, USA}
\email{\href{mailto:denissov@wisc.edu}{denissov@wisc.edu}}

\thanks{
This research was supported by the grants  NSF-DMS-2450716, Simons Fellowship in Mathematics, Simons Travel Support for Mathematicians Award, and the Van Vleck Professorship Research Award. The author gratefully acknowledges the hospitality of IHES where part of this work was done.
}

\subjclass{}

\keywords{}

\maketitle

\setcounter{tocdepth}{3}

\section{Krein systems, maximal functions, and the NCC}

Consider the fundamental matrix $X(x,k,A)$ for the Krein system which is defined as the solution to the Cauchy problem 
\begin{equation}\label{ks1}
\partial_x{X}=\left(
\begin{array}{cc}
ik & -\overline{A}\\
-A&0
\end{array}
\right)X, \quad X(0,k,A)=\left(\begin{array}{cc}1&0\\0&1\end{array}\right)\,,
\end{equation}
where $k=\xi+i\eta\in \C$ and $A(x)\in L^1_{\rm loc}(\R^+)$.
We can write (see \cite{DKr}, p.38)
$
X=\left(\begin{smallmatrix}
\frak{A}^* & \frak{B}^*\\
\frak{B} & \frak{A}
\end{smallmatrix}\right)
$,
where $\frak A(x,k,A)$ and $\frak B(x,k,A)$, as functions in $k$, are entire functions of exponential type at most $x$ and they satisfy
\begin{eqnarray}
\frak{A} \frak{A}^*-\frak{B} \frak{B}^*=e^{ikx}, \quad k\in \C,\\
|\frak{A}|^2=1+|\frak{B}|^2, \quad k\in \R,\\
|\frak{A}|^2\ge 1+|\frak{B}|^2, |\frak{A}|^2\ge1+|\frak{B}^*|^2, \quad k\in \C^+\,.
\end{eqnarray}
Here, we use convention $f^*(k):=e^{ikx}\overline{f(\bar{k})}$ for a  function $f$ defined in $\C$ and such operation is $x$-dependent.
Given $A\in L^2(\R^+)$, we have two limits as $x\to +\infty$:
\begin{eqnarray}\label{lim1}
\frak{A}(x,k,A)\to \frak{a}(k,A),\quad
\frak{B}(x,k,A)\to \frak{b}(k,A)
\end{eqnarray}
and the convergence is locally uniform in $k\in\C^+$ (see \cite{DKr}, Section 12). In folklore, the question of existence of limits $\lim_{x\to \infty}\frak{A}(x,\xi,A)$ and $\lim_{x\to \infty}\frak{B}(x,\xi,A)$ for a.e. $\xi\in \R$  often goes  under the name {\it Nonlinear Carleson Conjecture} (NCC). See  \cite{musc,st} for related results and discussion. This problem is  motivated by, e.g., studying the stationary and non-stationary scattering in Dirac and Schr\"odinger equations. In fact, existence of such limits often implies the existence of wave operators  for the corresponding Schr\"odinger dynamics \cite{ck,dm}. The weaker version of NCC asks to study the existence of the limit $\lim_{x\to\infty} \frak{B}(x,\xi,A)/\frak{A}(x,\xi,A)$, so we will address existence of  $\lim_{x\to \infty}\frak{A}(x,\xi,A)$ and $\lim_{x\to \infty}\frak{B}(x,\xi,A)$ for a.e. $\xi\in \R$ as {\it a strong version of NCC}.  In the case when $A\in L^p(\R^+), p\in [1,2)$, the convergence and the corresponding estimates for maximal functions are known \cite{ck,st,kov1}. For $p>2$, there are examples for which the limits do not exist (check \cite{kls}, where this problem was addressed for the Schr\"odinger operators). In the current work, we will study the borderline case  $p=2$. The note \cite{Den25} (see also \cite{tt,kov}) contains discussion of NCC  in the context of polynomials orthogonal on the unit circle (OPUC) which has some advantages. We, however, prefer to address NCC for Krein systems and Dirac equations because these two models exhibit more scaling properties than the OPUC setup and that makes our analysis less technical.\medskip

\noindent{\it Notation.} The symbol $C$ denotes the absolute constant which can change the value from formula to formula. If we write, e.g., $C(\alpha)$, this defines a positive function of parameter $\alpha$. For two non-negative functions $f_1$ and $f_2$, we write $f_1\lesssim f_2$ if  there is an absolute
constant $C$ such that $f_1\le Cf_2$ for all values of the arguments of $f_1$ and $f_2$. We define $\gtrsim$
similarly and say that $f_1\sim f_2$ if $f_1\lesssim f_2$ and
$f_2\lesssim f_1$ simultaneously. If $|f_3|\lesssim f_4$, we will write $f_3=O(f_4)$. If $\alpha$ is a parameter, we write $f_1\le_\alpha f_2$ if $f_1\le C(\alpha)f_2$. If $E$ is a measurable subset of $\R$, the symbol $\chi_E$ denotes its characteristic function and $|E|$ indicates its Lebesgue measure. For the $d\times d$ matrix $A$, the symbol $\|A\|$ denotes its operator norm in $\ell^2(\C^d)$. The class $\Sch(\R)$ is the class of Schwartz functions on $\R$, $C^\infty_c(\R^+)=\{f\in C^\infty(\R^+), \supp f\subset (0,\infty)\}$ (hence $f\equiv 0$ around the origin).  \medskip

 In this note, we propose studying the following two questions that address stronger versions of NCC.

 \begin{picture}(20,180)
 \thicklines
\put(10,50){\line(1,0){300}}
\put(100,50){\line(1,1){80}}
\put(100,50){\line(-1,1){80}}
\put(100,50){\circle{2}}
\put(120,95){\circle{2}}
\put(124,95){$k$}
\put(100,35){$\xi$}
\put(80,85){$S_\xi$}
\put(200,30){Figure 1. The Stolz angle $S_\xi$.}
\end{picture}
Assume $A\in L^2(\R^+)$. \medskip

\noindent \hypertarget{id1}{{\bf {Q1}\,}}(the strong version of NCC).
 {\it Do the limits $\lim_{x\to \infty}\frak{A}(x,\xi,A)$ and $\lim_{x\to \infty}\frak{B}(x,\xi,A)$ exist for a.e. $\xi\in \R$?}

 \medskip

It is known that the function $\frak{A}$ is outer in $\C^+$ and it can be written as (\cite{DKr}, formula (12.29))
\[
\frak{A}(x,k,A)=\exp\left(\frac{1}{\pi i}\int \frac{\log|\frak{A}(x,s,A)|}{s-k}ds\right), \quad k\in \C^+
\]
so $\log \frak{A}$ is correctly defined and
\[
\arg \,\frak{A}(x,k,A)=\Im \log \frak{A}(x,k,A)=\Im \left(\frac{1}{\pi i}\int \frac{\log|\frak{A}(x,s,A)|}{s-k}ds\right)\,.
\]
The same statements hold for the limit function $\frak{a}(k,A)$.\smallskip

\noindent \hypertarget{id2}{{\bf {Q2}}\,} (the bounds for  maximal functions).  Let $S_\xi$ be a Stolz angle based on $\xi\in \R$. Define two maximal functions
\[
\frak{M}(\xi,A)=\sup_{k\in S_\xi, x\in \R^+}|\arg\, \frak{A}(x,k,A)|\,, \quad \frak{M}_w(\xi,A)=\sup_{x\in \R^+}|\arg\, \frak{A}(x,\xi,A)|,\, \quad \xi\in \R\,.
\]
Clearly, $\frak{M}_w(\xi,A)\le \frak{M}(\xi,A)$.\medskip

{\it Are  the bounds $\frak{M}(\xi,A)<\infty$ or $\frak{M}(\xi,A)<\infty$ true for a.e. $\xi\in \R$?
If so, do we have  weak-$L^1$ estimates such as
\begin{equation}\label{we}
|\{\xi: \frak{M}(\xi,A)\ge \lambda \}|\lesssim \frac{\|A\|_2}{\lambda} \quad {\it or} \quad  |\{\xi: \frak{M}_w(\xi,A)\ge \lambda \}|\lesssim \frac{\|A\|_2}{\lambda}, \quad \forall \lambda>0
\end{equation}
at least in the perturbative regime when $\|A\|_2\le \delta_0$ where $\delta_0$ is  some small positive number?}\medskip

Our main result is the following theorem.
\begin{theorem}\label{mt}Both questions Q1 and Q2 have negative answers.
\end{theorem}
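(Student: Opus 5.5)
The plan is to construct, for every $\delta>0$, a potential $A\in L^2(\R^+)$ with $\|A\|_2\le\delta$ for which the weak maximal function $\frak{M}_w(\xi,A)$ is infinite on a set of positive measure. Since $\frak{M}_w\le\frak{M}$, this settles Q2 for both maximal functions. For Q1 we use the following mechanism. Suppose $\frak{A}(x,\xi,A)$ converges for a.e. $\xi$. The limit must be $\frak{a}(\xi,A)$, which is nonzero because $|\frak{A}|\ge1$ on $\R$. Then $\arg\frak{A}(x,\xi,A)$ can stay unbounded only if the branch defined through the outer representation winds, i.e. picks up multiples of $2\pi$. We will arrange the construction so that the phase does wander by unbounded amounts that are not multiples of $2\pi$. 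Alternatively, and more robustly, we will arrange that $|\frak{A}(x_n,\xi,A)|$ oscillates between values bounded away from each other along sequences $x_n\to\infty$, which directly contradicts convergence.

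\textbf{Step 1: a building block from first-order perturbation theory.} For small $A$ we have
\[
\frak{B}(x,k,A)\approx-\int_0^xA(t)e^{ikt}dt,
\]
and, for $\xi$ real,
\[
\log|\frak{A}|\approx\tfrac12|\frak{B}|^2 .
\]
The outer representation then gives $\arg\frak{A}(x,\xi,A)\approx\tfrac12 H(|\frak{B}(x,\cdot)|^2)(\xi)$, up to sign, where $H$ is the Hilbert transform; this is the quadratic (second-order) term of the phase. We take a Krein block on $[0,L]$ whose linearized spectrum $|\widehat{A\chi_{[0,x]}}|^2$ is concentrated near an interval $I$. The Hilbert transform of a bump of height $h$ and width $|I|$ produces a logarithmic singularity of size about $h\log(1/\mathrm{dist}(\xi,I))$ at the edge of $I$. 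Because $\|A\|_2^2$ controls $\int|\frak{B}|^2$ only in the $L^1$ sense, we can make $\sup_\xi|\arg|$ large on a set of measure about $e^{-\lambda/h}$ while $\|A\|_2^2\sim h|I|$. This already shows that the weak-$L^1$ estimate \eqref{we} of the form $\|A\|_2/\lambda$ fails. The key computation here is that the truncation in $x$ lets us choose the time at which the spectrum is sharply cut off. The sharp cutoff comes from $\frak{B}(x,\cdot)$ being a near-indicator of $I$ at one intermediate time $x$ and smooth later. So the maximal function in $x$ sees the singular Hilbert transform even though the limit does not.

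\textbf{Step 2: scaling and gluing.} Using the scaling $A(t)\mapsto\mu A(\mu t)$ and modulation $A(t)\mapsto e^{i\alpha t}A(t)$, which act on $X$ by $k\mapsto k/\mu$ and shifts, we place rescaled blocks $A_j$ on disjoint far-apart intervals, with $\sum\|A_j\|_2^2<\delta^2$. The block $A_j$ produces a large phase, of size $\ge j$, at some time on a set $E_j$, and we choose the $E_j$ so that $\limsup E_j$ has positive measure. Concretely, we tile a fixed interval by translates of small intervals $I$, via modulations, so that the union of the edge-neighborhoods covers a fixed proportion of $[0,1]$. Gluing uses the multiplicativity of transfer matrices: $X$ on $[0,x_1+x_2]$ is the product of the two block matrices, with the second shifted by $e^{ikx_1}$ factors. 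From this, the phase of $\frak{A}$ after block $j$ equals the accumulated phase plus the block-$j$ contribution plus cross terms involving $\frak{B}$, which are small where the previous blocks' $|\frak{B}|$ is small.

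\textbf{Main obstacle.} The hardest part is controlling the error terms beyond the quadratic approximation uniformly in $x$ and on the relevant $\xi$-sets. The nonlinear Fourier transform is not small pointwise under mere $L^2$ smallness, and near the singular edge $\arg$ is only logarithmically large. I would first try to work with blocks that are individually in $L^1$ with tiny $L^1$ norm on their own scale. There the perturbation series converges absolutely and uniformly in $\xi$ and $x$, since $\|A_j\|_1$ is small even though $\sum\|A_j\|_1=\infty$. The blocks should then be separated so widely that the cross terms oscillate, e.g. as $e^{2i\xi x_j}$, and are absorbed as bounded errors. Q1 would then follow by showing that $|\frak{B}(x,\xi)|$ does not converge on $\limsup E_j$, since each block causes a jump of size $\gtrsim h_j$ there. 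With $h_j$ chosen so that $\sum h_j^2|I_j|<\infty$ while the jumps do not decay on $E_j$, the limit fails to exist on a set of positive measure.
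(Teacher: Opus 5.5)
\textbf{There is a genuine gap: the proposal never identifies the mechanism that produces the large phase.} Your overall shape (rescaled, modulated, widely separated blocks forcing $\frak{M}_w=\infty$ on a set of positive measure) matches the paper.

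For each \emph{fixed} $x$, $\arg\frak{A}(x,\xi,A)$ is, up to a constant, the Hilbert transform of $\log|\frak{A}(x,\cdot,A)|\ge0$. By the sum rule, the integral of that function is $\pi\int_0^x|A|^2$. Hence $|\{\xi:|\arg\frak{A}(x,\xi,A)|>\lambda\}|\lesssim\|A\|_2^2/\lambda$ at every single time, and \eqref{we} holds at each fixed $x$ in the perturbative regime. Your own count agrees. The edge singularity is $h\log(|I|/\mathrm{dist})$, so the set has measure about $|I|e^{-\lambda/h}\le h|I|/\lambda\sim\|A\|_2^2/\lambda$. So a single sharp cutoff never breaks \eqref{we}, and Step 1 does not "already" disprove it.

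The same bound undermines Step 2. The contribution of block $j$ alone at one time exceeds $j$ only on a set of measure $\lesssim\|A_j\|_2^2/j$. These measures are summable, so $\limsup E_j$ would be null. Tiles seen through their own edges cover only measure about $e^{-\lambda/h}$, so a fixed proportion would need $h\gtrsim\lambda$, i.e.\ $\|A\|_2^2\gtrsim\lambda$.

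What the paper uses is a \emph{staircase}. It takes $\nu$ bumps whose $\log|\frak{a}_j|$ have height $\sim\delta^2$, width $1/\nu$ and location $\xi_j=j/\nu$. These are glued \emph{in increasing order of frequency}, so after the $j$-th one the accumulated log-modulus is a near-indicator of $[0,\xi_j]$ with a sharp right edge. Its Hilbert transform just to the right of that edge is $\gtrsim\delta^2\log\nu$: the logarithm of plateau length over edge width, not of $|I|/\mathrm{dist}$ for one block. Since the edges sweep across $[\frac12,1]$ as $j$ varies, the supremum over truncation times is $\gtrsim\delta^2\log\nu$ at every point of a set of measure $\sim1$. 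One then needs $\delta_n^2\log\nu_n\to\infty$ with $\sum_n\delta_n^2<\infty$.

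Your error control and your route to Q1 also fail as stated.

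\emph{Error control.} A block with small $\|A_j\|_1$ satisfies $|\frak{A}(x,k,A_j)-1|\lesssim\|A_j\|_1$ uniformly in $x$ and $k\in\overline{\C^+}$ (Duhamel, as in Lemma \ref{ell1}), so it cannot create large phase. Meanwhile $\|\mu A(\mu\,\cdot)\|_1=\|A\|_1$, so a staircase of $\nu$ rescaled bumps of amplitude $\delta$ has $L^1$ norm $\sim\delta\nu$. Absolute convergence is therefore unavailable exactly where it is needed. Cross terms are not made small by oscillation either, since $\frak{M}_w$ is evaluated at a fixed real $\xi$.

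In the paper the cross terms vanish identically. For the Dirac operator, inverse scattering produces bumps with $b(\cdot,q_j)=\delta b(\nu(\cdot-\xi_j))$ of disjoint compact supports. Then $b(\xi,q_j)b(\xi,q_\ell)=0$, and $a(\xi,Q)=\prod_j a(\xi,q_j)+O(e^{-\nu^2})$ once the gaps $e^{\nu^2}$ beat the $C^\nu$ loss in Lemma \ref{lap5}. This is transferred to the half-line through $A_+(x)=-\frac12\overline{q(x/2)}$ and \eqref{trans1}. For compactly supported Krein potentials, $\frak{b}$ is entire and cannot vanish on an interval. That is why Theorem \ref{ks12} only compares products in $\Im k\ge\nu^{-1}$, and so reaches only the Stolz-angle maximal function.

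\emph{Q1.} Non-decaying jumps of $|\frak{B}|$ are excluded by the sum rule. With $|\frak{A}|,|\frak{B}|=O(1)$, the change of $|\frak{B}(\cdot,\xi)|$ across a well-separated block $j$ is $O(|\frak{b}_j(\xi)|)$. Since $\int\log(1+|\frak{b}_j|^2)=2\pi\|A_j\|_2^2$, we get $\sum_j|\{|\frak{b}_j|\ge c\}|<\infty$. By Borel--Cantelli, a.e.\ $\xi$ sees only finitely many such jumps. In the paper's example $|a|$ stays in $[1,1+Ce^{-n_0}]$, cf.\ \eqref{pron2}; only the phase diverges.

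The argument that works is continuity. The lifted $\arg\frak{A}(x,\xi)$, read along the image of the vertical ray (which lies in $\{|z|\ge1\}$), is continuous in $x$. Convergence of $\frak{A}(x,\xi)$ would then confine it to a short interval for large $x$, contradicting unboundedness. Nothing about multiples of $2\pi$ has to be arranged.
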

\medskip

\begin{figure}[htbp]
    \centering
    % Example: trim 1cm from the bottom and 0.5cm from the top
    \includegraphics[clip, trim=0cm 12cm 0cm 0.5cm, width=\linewidth]{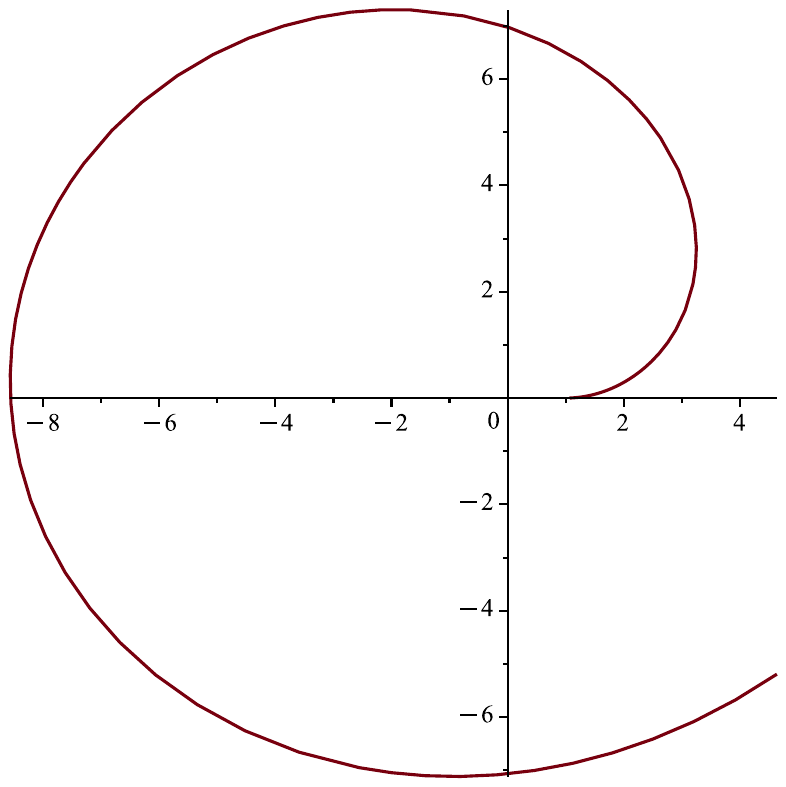}
   {Figure 2. The image of the segment $\{k=2+i\eta, \eta\in [0,100]\}$ under the $\frak{A}(7,k,A)$--map. Taking $\eta=100$ gives us a point close to $z=1$.}
    \label{pic2}
\end{figure}

\noindent{\bf Remark.} Recall that  $|\frak{A}(x,k,A)|\ge 1$ for $\Im k\ge 0$.
In this note, we will be mainly interested in the argument (rotation) of $\frak{A}(x,k,A)$.  Since $\frak{A}(x,\infty,A)=1$, it is natural to focus on $\log \frak{A}(x,k,A),\, \Im k\ge 0$ and control its imaginary part choosing the branch of log such that $\log \frak{A}(x,\infty,A)=0$. It becomes most explicit if we choose a path in $\C^+$ connecting $k=\xi\in \R$ to a point at infinity and  check the total variation of the argument over the trajectory traced by  $\frak{A}(x,k,A)$ when $k$ follows that path. For example, in the Figure~2,  we  take $A=\chi_{0\le x\le 7}$, $\xi=2$, $x=7$, compute $\frak{A}(7,k,A)$ and Maple-plot such trajectory by choosing the ray $\{k=2+i\eta, \eta\in [0,+\infty)\}$ as such path. \smallskip

We will provide the proof to Theorem \ref{mt} in the second section in the more general context of the Dirac equations, see Corollary \ref{cm}. However, the lack of \eqref{we} for the stronger version of maximal function is easy enough and follows from the following result.

\begin{theorem} \label{ks12}There is a sequence of compactly supported functions $\{\Psi_n\}$ such that $\|\Psi_n\|_2<C$ and 
\begin{equation}\label{gg1}
|\{\xi: \frak{M}(\xi,\Psi_n)\ge C'\log n \}|\gtrsim 1
\end{equation}
for some positive constants $C$ and $C'$ hence \eqref{we} fails for $\mathfrak{M}$.
\end{theorem}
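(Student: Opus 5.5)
The plan is to reduce \eqref{gg1} to a statement about the conjugate Poisson integral of a nonnegative function on $\R$, and then to produce that function with the linear Fourier transform of a chirp.

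\textbf{Reduction.} By the outer representation of $\frak{A}$ recalled above,
\[
\arg\frak{A}(x,\xi+i\eta,A)=\frac{1}{\pi}\int\frac{(s-\xi)\log|\frak{A}(x,s,A)|}{(s-\xi)^2+\eta^2}\,ds ,
\]
which is the conjugate Poisson integral of $\log|\frak{A}(x,\cdot,A)|$. On $\R$ we have the exact identity
\[
\log|\frak{A}|=\tfrac12\log\bigl(1+|\frak{B}|^2\bigr).
\]
So it suffices to find, for a.e. $\xi$ in a fixed interval $I$, a time $x=x(\xi)$ such that $s\mapsto\log|\frak{A}(x,s,\Psi_n)|$ looks like $c\,\chi_{[\xi-1,\xi]}(s)$ with an edge at $\xi$ of width $\lesssim 1/n$. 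The model computation is that the conjugate Poisson integral of $\chi_{[\xi-1,\xi]}$ at the point $\xi+i/n$, which lies in $S_\xi$, equals
\[
\frac{1}{\pi}\,\log\frac{|k-\xi+1|}{|k-\xi|}\sim\log n .
\]
This would give $\frak{M}(\xi,\Psi_n)\gtrsim\log n$ on $I$, hence \eqref{gg1} with $|I|\sim1$.

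\textbf{Construction (chirp).} Let $T=n^2$, $a=1/T$, and
\[
\Psi_n(t)=\delta\,a^{1/2}e^{iat^2/2}\chi_{[0,T]}(t),
\]
with $\delta$ a small absolute constant. Then $\|\Psi_n\|_2^2=\delta^2 aT=\delta^2$. The first-order (Born) term of $\frak{B}(x,s,\Psi_n)$ is, up to a sign, $F_x(s)=\int_0^x\Psi_n(t)e^{ist}\,dt$ or its reflection, depending on the paper's convention.

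By stationary phase, for $x\le T$ the function $|F_x(s)|^2$ is approximately $2\pi\delta^2$ for $s$ in $(-ax,0)$, or the reflected interval, which has length $\le1$. The stationary point lies inside the interval and has width about $a^{-1/2}$ in $t$. The modulus drops to near $0$ outside, across a transition region (Fresnel edge) of width $\sim a^{1/2}=1/n$ in $s$.

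Hence, as $x$ runs over $[T/2,T]$, the edge $-ax$ sweeps an interval $I$ of length $1/2$. For each $\xi\in I$ we choose $x$ with the edge at $\xi$, and $\tfrac12\log(1+|F_x|^2)$ is a plateau of height $\sim\delta^2$ ending sharply at $\xi$. The Fresnel oscillations near the edge, and the $O(a^{1/2})$ errors near $s=0$, contribute $O(\delta^2)$ to the conjugate integral at $\xi+i/n$. This is because they are bounded functions which are $L^1$-small on the $1/n$ scale, or which sit at distance $\sim1$ from $\xi$. Therefore the main term is $\gtrsim\delta^2\log n$.

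\textbf{Main obstacle: the nonlinear error.} We must show that replacing $|\frak{B}(x,s,\Psi_n)|^2$ by $|F_x(s)|^2$ costs $o(\log n)$ after taking the conjugate Poisson integral at height $1/n$. A sufficient bound is a pointwise estimate on $\R$,
\[
\bigl||\frak{B}(x,s,\Psi_n)|^2-|F_x(s)|^2\bigr|\lesssim\delta^4 ,
\]
uniform in $x$ and $s$, since a bounded function's conjugate integral at height $1/n$ is $O(\log n)$ times its sup norm. Then $\delta^2\log n-C\delta^4\log n\gtrsim\delta^2\log n$ for small $\delta$.

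I would first try the multilinear (Picard/Duhamel) series for the Krein system. The $(2m+1)$-st term is an iterated oscillatory integral over a simplex with the chirp phase. For a chirp, each integration contributes a stationary-point factor $a^{-1/2}$ times the amplitude $\delta a^{1/2}$, i.e. $O(\delta)$ each, and the simplex ordering keeps the combinatorial constants $\le C^m/m!$-like. This should give pointwise bounds $C^m\delta^{2m+1}$.

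If that fails, the fallback is to use the exact scaling of the Krein system, namely $A\mapsto\lambda A(\lambda\,\cdot)$ with $k\mapsto k/\lambda$, to reduce to a fixed chirp-type potential. On that potential one would use a WKB/Fresnel asymptotic for $\frak{B}$ directly.
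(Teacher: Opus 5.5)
\textbf{There is a genuine gap: the nonlinear step fails as proposed.} Your Born-level computation is sound. For the chirp, $\tfrac12|F_x|^2$ is a plateau of height $\pi\delta^2$ on $(-ax,0)$ with Fresnel edges of width $a^{1/2}=1/n$. Its conjugate Poisson integral at $\xi+i/n$, with $\xi=-ax$, is $\sim\delta^2\log n$. The problem is the nonlinear step, where the route you propose does not work.

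Your heuristic that the Picard terms obey $C^m\delta^{2m+1}$ bounds misses something. The phase $\psi(t_1)-\psi(t_2)+\psi(t_3)-\cdots$, with $\psi(t)=at^2/2+st$, is not merely stationary at a point. It vanishes identically on the faces $t_i=t_{i+1}$ of the simplex. Integrating across these resonant diagonals produces principal values $\int dt/(at+s)$, i.e.\ logarithms.

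\begin{itemize}
\item The second-order Picard term of $\frak{A}(x,k)$ equals $\frac{1}{2\pi i}\int_\R |F_x(s)|^2(s-k)^{-1}\,ds$. Its imaginary part at $k=\xi+i/n$ is therefore, up to sign, exactly your main term, of size $\delta^2\log n$. So no bound of the form $C^m\delta^{2m}$ holds term by term.
\item For $\frak{B}$ the same mechanism applies. For $s$ in the bulk of the plateau, the third-order term is $\approx i\theta F_x(s)$ with $\theta\sim\delta^2\log n$. This comes from the adiabatic phases accumulated before and after the resonance time $-s/a$.
\item Heuristically, the $(2m+1)$-st term behaves like $\delta(\delta^2\log n)^m/m!$.
\end{itemize}

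Bounding $|\frak{B}|^2-|F_x|^2$ through absolute values of these terms gives an error of order $\delta^4\log n$ in sup norm. After the conjugate integral at height $1/n$ this becomes $\delta^4(\log n)^2$. That exceeds the main term $\delta^2\log n$ exactly in the regime $\delta^2\log n\to\infty$ that you need.

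\textbf{What is missing, and how the paper avoids it.} The pointwise bound you want is plausibly true. But it holds only because these $\log n$-large contributions are unimodular phases that leave $|\frak{B}|$ unchanged. Exploiting that cancellation needs a genuine normal-form or exact analysis. After your rescaling and the gauge $X=\mathrm{diag}(e^{-it^2/2},1)Y$, the problem becomes the Landau--Zener system $Y'=\left(\begin{smallmatrix} i(k+t)&-\delta\\ -\delta&0\end{smallmatrix}\right)Y$ on $[0,n]$, solvable via parabolic cylinder functions. You would need asymptotics uniform in $x\le n$ and $s\in\R$, including both edges and the tails. Your fallback names this but does not carry it out, and it is the whole nonlinear content of the proof.

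A smaller point: a bounded function's conjugate integral on $\R$ is not controlled by its sup norm times $\log n$ alone. You also need an $L^1$ bound, which the sum rule and Plancherel supply.

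The paper sidesteps all of this by discretizing your chirp and inserting huge gaps. The bumps $A_j=A^{(1/\nu)}e^{-i\xi_jx}$, $\xi_j=j/\nu$, are separated by $R=\nu^3$. Since $\frak{M}$ is a supremum over the Stolz angle, one may work at $\Im k\ge1/\nu$, where the coupling between bumps carries the factor $e^{ikR}$. By \eqref{lo9}, $\frak{A}$ at the times $(\nu+R)\ell$ is then within $e^{-C\nu}$ of $\prod_{j<\ell}\frak{a}(k,A_j)$. The argument becomes exactly additive, and Lemma~\ref{llog} computes it as a conjugate Poisson integral of a staircase of bumps of height $\sim1$. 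No smallness of the potential and no real-axis control of $|\frak{B}|$ is needed.
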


\begin{proof}  We will need the following two scaling properties of the fundamental matrix $X$:\smallskip

\noindent {\it 1. Modulation.} Let $\ell\in \R$, $A^{[\ell]}:=Ae^{i\ell x}$ and
$
X^{[\ell]}:=\left(
\begin{smallmatrix}
e^{-i\ell x} &0\\
0&1\end{smallmatrix} \right)X\,.
$
Then, $\partial_xX^{[\ell]}=\left(
\begin{smallmatrix}
i(k-\ell) & -\overline{A}^{[\ell]}\\
-A^{[\ell]}&0
\end{smallmatrix}
\right)X^{[\ell]}$,  $ X^{[\ell]}(0,k)=\idm$. Hence,
\begin{equation}\label{sym-tr}
\left(
\begin{array}{cc}
e^{-i\ell x} &0\\
0&1\end{array} \right)X(x,k,A)=X(x,k-\ell,Ae^{i\ell x})
\end{equation}
and $\mathfrak{A}(x,k,A)=\mathfrak{A}(x,k-\ell,Ae^{i\ell x}),  \mathfrak{B}(x,k,A)=\mathfrak{B}(x,k-\ell,Ae^{i\ell x})$.
\smallskip

\noindent {\it 2. Dilation.} Suppose $\mu>0$, $A^{(\mu)}(x):=\mu A(\mu x)$, and $X^{(\mu)}:=X(\mu x, \mu^{-1}k, A)$. We have
\[
\partial_x X^{(\mu)}=\left(
\begin{array}{cc}
ik &-\overline{A}^{(\mu)}\\
-A^{(\mu)}& 0
\end{array}\right)X^{(\mu)}, \quad X^{(\mu)}(0,k)=\left(\begin{array}{cc}1&0\\0&1\end{array}\right)\,.
\] 
Hence, 
\begin{equation}
X(\mu x,\mu^{-1} k,A)=X(x,k,A^{(\mu)})
\end{equation}
and $\mathfrak{A}(\mu x,\mu^{-1}k,A)=\mathfrak{A}(x,k,A^{(\mu)}),  \mathfrak{B}(\mu x,\mu^{-1}k,A)=\mathfrak{B}(x,k,A^{(\mu)})$.\smallskip

Now, we continue with the proof which consists of two steps. \medskip

\noindent {\bf First step: simplified decoupled model.} Consider a function $A(x)\in C_c^\infty(\R^+)$ not identically equal to zero and supported on the interval $(0,1)$. This $A$ defines the direct scattering transforms $A(x)\stackrel{(\frak{b})}\mapsto \frak{b}(\xi,A)$ and $A(x)\stackrel{(\frak{a})}\mapsto \frak{a}(\xi,A)$, where $\frak{a}(\xi,A)$ and $\frak{b}(\xi,A)$ are the boundary values of $\frak{a}(k,A)$ and $\frak{b}(k,A)$, given in~\eqref{lim1}. Recall that $\frak{a}(k,A)$ is an outer  function in $\C^+$ and $|\frak{a}(\xi,A)|^2=1+|\frak{b}(\xi,A)|^2, \xi\in \R$.  Clearly, $\frak{a}(k,A)=\frak{A}(1,k,A)$ and $\frak{b}(k,A)=\frak{B}(1,k,A)$. Hence, $\frak{a},\frak{b}\in C^\infty(\R)$ and (we will now drop the dependence on $A$ and write $\frak{a}$ for shorthand) \[\frak{a}(k)=1+ik^{-1}\int_0^\infty |A|^2dx+O(|k|^{-2})\,,\] when $k\in \C^+$ and $|k|\to\infty$. Moreover, $\log |\frak{a}|$ is nonnegative and the {\it sum rule} holds
(see \cite{DKr}, (12.2))
 \[\int_{\R} \log |\frak{a}|d\xi=\pi\int_0^\infty |A|^2dx.\] Take $\nu  \ge 1$. The dilation argument with $\mu=\frac 1\nu  $ given above shows that
\[
A^{(1/\nu  )}(x)=A(x/\nu  )/\nu  \stackrel{(\frak{a})}{\mapsto} \frak{a}^{(1/\nu  )}(\xi):=\frak{a}(\nu  \xi)\,.
\]
Then, let 
\begin{equation}\label{lo8}
A_j(x):=A^{(1/\nu  )}(x)e^{-i\xi_jx},\quad \xi_j:=j/\nu  , \quad j\in \{0,\ldots,\nu  -1\}\,.
\end{equation}
Clearly, we have
\[
\int_0^\infty |A_j(x)|^2dx=\nu  ^{-1}\int_0^\infty |A(x)|^2dx\,.
\]
The previously discussed modulation scaling shows that $A_j\stackrel{(\frak{a})}{\mapsto} \frak{a}_j(k):=\frak{a}(\nu  (k-\xi_j))$. \medskip

Next, we study the following functions \[\frak{O}_j(k):=\prod_{0\le \ell\le j}\frak{a}_\ell(k)\,,\quad j\in \{0,\ldots,\nu  -1\}\,,
\] which are outer in $k\in \C^+$. Specifically, let us introduce
\[
\mathfrak{F}_\nu  (\xi):=\sup_{k\in S_\xi,0\le j\le \nu  -1}|\arg \,\frak{O}_j(k)|=\sup_{k\in S_\xi,0\le j\le \nu  -1}\left|\sum_{\ell=0}^j\arg \,\frak{a}_\ell(k)\right|\,.
\]
\begin{lemma}\label{llog} We have 
\begin{equation}\label{loga}
\mathfrak{F}_\nu  (\xi)\gtrsim \log\nu  
\end{equation}
for $\xi\in [\frac 12,1]$.\label{t1}
\end{lemma}
\begin{proof}
Notice that 
\[
\arg \frak{O}_j(k)=\Im \left(\log \frak{a}_0(k)+\ldots+\log \frak{a}_{j}(k)\right)=\sum_{p=0}^j\Im \left(\frac{1}{\pi i}\int\frac{\log|\frak{a}(\nu  (s-\xi_p))|}{s-k}ds\right), \quad k\in \C^+\,.
\]

%\qbezier(30,30)(31,32)(35,80)(50,32)(80,30)

\begin{tikzpicture}
    % Draw a line from (0,0) to (2,2)
    \draw[thick, black] (-0.1,0) -- (13,0);
    %\draw[thick, black] (5.5,-0.1) -- (5.5,4);
    % Draw a circle at (3,1) with radius 0.5cm
    \draw[black, fill=black] (1,0) circle (0.4mm);
    \node at (1,-0.4) {$\xi_{j-4}$};
    \draw[black, fill=black] (2,0) circle (0.4mm);
    \node at (2,-0.4) {$\xi_{j-3}$};
    \draw[black, fill=black] (3,0) circle (0.4mm);
    \node at (3,-0.4) {$\xi_{j-2}$};
    \draw[black, fill=black] (4,0) circle (0.4mm);
    \node at (4,-0.4) {$\xi_{j-1}$};
    \draw[black, fill=black] (5,0) circle (0.4mm);
    \node at (5,-0.4) {$\xi_j$};
    \node at (6,0.4) {$k$};
    \draw[black, fill=black] (5.8,0.2) circle (0.4mm);
    \node at (8,-1) {Figure 3. Creation of logarithmic growth by piling bumps to the left of $k$.};
     \node at (8,-1.5) { The ``height'' of each bump is $\sim 1$ and its ``width'' is $\sim 1/\nu  $. We have $\xi_j-\xi_{j-1}=1/\nu  $\,.};

    \foreach \r in {0,...,4}
    {
    \draw[darkbrown, samples=1000, domain=-1:1, smooth] plot [smooth, solid] coordinates {(\r+0.6,0.1) (\r+0.8, 0.6) 
    (\r+1,4) (\r+1.2,0.6) (\r+1.4,0.1)};
    }

    \end{tikzpicture}
\bigskip

Notice that $\frak{a}(\xi)$ is smooth and 
\begin{equation}\label{stro}
0\le \log |\frak{a}(\xi)|\le_{\ell} |\xi|^{-\ell},\quad |\xi|\to\infty
\end{equation}
 for every $\ell\in \mathbb{N}$. Then, for $j\in \{\frac \nu  2,\ldots, \nu  -1\}$, we get
\[
\int_\R\left(\sum_{p=0}^{j}\log |\frak{a}(\nu  (s-\xi_p))|\right)\frac{1}{s-k}ds=\int_{\R}\left(\sum_{p=0}^j \log |\frak{a}(s'-p)|\right) \frac{1}{s'-\nu   k}ds'\,.
\]
We claim that
\begin{equation}\label{hyu}
\left| \int_{\R}\left(\sum_{p=0}^j \log |\frak{a}(s'-p)|\right) \Re \left(\frac{1}{s'-\nu   k}\right)ds'\right|\ge C_1|\log |k-\xi_j||-C_2
\end{equation}
for all $k$ that satisfy  $0\le \Im k\le 1$ and $\xi_j+\frac{1}{2\nu  }\le \Re k\le 4$ (see the Figure 3). Indeed, if $\omega:=\nu   k=:x+iy$, then $x\ge j+\tfrac 12$ and
\[
\Re \left(\frac{1}{s'-\omega}\right)=\frac{s'-x}{(s'-x)^2+y^2}\,.
\]
The nonnegative function $\sum_{p=0}^j \log |\frak{a}(s'-p)|$ satisfies
\[
\sum_{p=0}^j \log |\frak{a}(s'-p)|\stackrel{\eqref{stro}}{\le}_\ell (1+|s'-j|)^{-\ell},\quad \forall \ell\in \mathbb{N}
\]
for $s'>j$. Therefore, 
\[
\left| \int_{s'>j, |s'-x|>1}\left(\sum_{p=0}^j \log |\frak{a}(s'-p)|\right) \frac{s'-x}{(s'-x)^2+y^2}ds'\right|<C
\]
if $x\ge j+\tfrac 12$. Moreover,  $|\partial_{s'}\sum_{p=0}^j \log |\frak{a}(s'-p)||\le C$ for $s'>j$, so we have 
\[
\left| \int_{s'>j, |s'-x|<1}\left(\sum_{p=0}^j \log |\frak{a}(s'-p)|\right) \frac{s'-x}{(s'-x)^2+y^2}ds'\right|<C
\]
for $x\ge j+\tfrac 12$. Next, we again assume that $x\ge j+\tfrac 12$ and consider the third integral (check Figure 3):
\begin{eqnarray*}
\left| \int_{s'<j}\left(\sum_{p=0}^j \log |\frak{a}(s'-p)|\right) \frac{x-s'}{(s'-x)^2+y^2}ds'\right|\gtrsim \int_0^j\frac{x-s'}{(x-s')^2+y^2}ds'=\\
\left|\Re \int_0^j\frac{1}{\omega-s'}ds'\right|=\left|\Re \log \left(\frac{\omega-j}{\omega}\right)\right|=|\log|(k-\xi_j)/k||\ge |\log|k-\xi_j||-C
\end{eqnarray*}
for the given range of $k$ (we have taken the principal branch of the  logarithm). Combining these bounds, we have \eqref{hyu}.\medskip

Now that we proved the claim \eqref{hyu}, we get \[\max_{j\le \nu  -1} |\arg\, \frak{O}_j(k)|\ge C_1|\log(\tfrac 1\nu  +\Im k)|-C_2\] for all $k$ that satisfy $0\le \Im k\le 1$ and $\frac 12\le \Re k\le 1$. That implies \eqref{loga}.
\end{proof}

\noindent {\bf Second step: well-separated bumps.}  Besides $\nu  $, we now take another large parameter $R\ge \nu  $ and let
\[
Q_{\nu  ,R}(x)=\sum_{j=0}^{\nu  -1}A_j(x-j(\nu  +R))\,,
\]
where $A_j$ was introduced in \eqref{lo8}.

In other words, we place $\nu  $ bumps $A_j$, each of which is supported inside $(0,\nu  )$ on $\R^+$, separating them by the intervals of size $R$ where  $ Q_{\nu  ,R}$ vanishes. By the group property for the fundamental matrix, we have 
\begin{equation}\label{jop}
\left(
\begin{array}{cc}
\frak{A}^*((\nu  +R)\ell,k,Q_{\nu  ,R})& \frak{B}^*((\nu  +R)\ell,k,Q_{\nu  ,R})\\
\frak{B}((\nu  +R)\ell,k,Q_{\nu  ,R}) & \frak{A}((\nu  +R)\ell,k,Q_{\nu  ,R})
\end{array}
\right)=\prod_{j=0}^{\ell-1}\left(
\begin{array}{cc}
e^{ikR}\frak{a}^*(k,A_j)& e^{ikR}{\frak{b}^*(k,A_j)}\\
\frak{b}(k,A_j) & \frak{a}(k,A_j)
\end{array}
\right)
\end{equation}
for every $\, \ell\in \{1,\ldots,\nu  \}, \,k\in \C^+$, where $\frak{a}^*(k,A_j)=\frak{A}^*(\nu  ,k,A_j)=e^{i\nu   k}\overline{\frak{a}(\overline{k},A_j)}$ and $\frak{b}^*(k,A_j)=\frak{B}^*(\nu  ,k,A_j)=e^{i\nu   k}\overline{\frak{b}(\overline{k},A_j)}$.\smallskip

We will need to use the following bounds later. Since $\|A_j\|_1\sim 1$,  Lemma \ref{ell1} from the Appendix yields
\[
\sup_{x\in \R^+, \Im k\ge 0}\|X(x,k,A_j)\|\lesssim 1\,.
\]
Therefore, 
$
|\frak{a}(k,A_j)|\lesssim 1, \,|\frak{a}^*(k,A_j)|\lesssim 1,\,|\frak{b}(k,A_j)|\lesssim 1,\,|\frak{b}^*(k,A_j)|\lesssim 1
$
uniformly in $k\in \C^+$, in $j\in \{0,\ldots,\nu  -1\}$, and in $\nu  \in [1,\infty)$. From \eqref{lo9}, also proved in the Appendix, we get
\[
|\frak{A}((\nu  +R)\ell,k,Q_{\nu  ,R})-\prod_{j=0}^{\ell-1}\frak{a}(k,A_j)|\le e^{-R\Im k}C_1^\ell
\]
uniformly in $k: \Im k\ge 0$ and $\ell\in \{1,\ldots,\nu  \}$.  For example, take $R=\nu  ^3$ to guarantee
\[
|\frak{A}((\nu  +R)\ell,k,Q_{\nu  ,\nu  ^3})-\prod_{j=0}^{\ell-1}\frak{a}(k,A_j)|<e^{-C_2\nu  }, \quad \ell\in \{1,\ldots,\nu  \}
\]
in the domain $\Im k\ge \nu  ^{-1}$ with some positive $C_2$. 

That inequality implies that if we take any ray $\{k=\xi+i\eta, \eta\ge \nu  ^{-1}\}$ and consider two curves $\{\frak{A}((\nu  +R)\ell,k,Q_{\nu  ,\nu  ^3})\}$ and $\{\prod_{j=0}^{\ell-1}\frak{a}(k,A_j)\}$ which are the images of such ray under these two maps, then the points on these curves, corresponding to the same $k$, will be at most $e^{-C_2\nu  }$ apart. Notice that  both curves approach the point $z=1$ when $\eta\to+\infty$ and they stay outside the unit disc around the origin. Comparing with the results  for the simplified decoupled model, we get
\[
\frak{M}(\xi,Q_{\nu  ,\nu  ^3})\stackrel{\eqref{loga}}{\gtrsim} \log\nu  
\]
for $\xi\in [\frac 12,1]$ as long as $\nu  $ is large enough. Now, notice that $\|Q_{\nu  ,\nu  ^3}\|_2\sim 1$. Finally, taking $\Psi_n:=Q_{n,n^3}$ one gets \eqref{gg1}. \end{proof}
\noindent {\bf Remark. } Notice that in the statement of the theorem, we can make $\sup_n\|\Psi_n\|_2$ as small as we like if we start with $A_\delta=\delta A$ instead of $A$ and make a positive $\delta$ small.\medskip

 In the next section, it will be convenient to first address \hyperlink{id1}{Q1} and the first part of \hyperlink{id2}{Q2} in the context of the Dirac equation on the whole real line. After we answer these questions for Dirac model, we will come back to the Krein systems.\bigskip

\section{Dirac operator formalism, inverse scattering theory approach, and negative answers to {Q1} and Q2}

We recall some basics about the Dirac equation on the line closely following \cite{BD}, Section 2 (see also \cite{FD}). Given a complex-valued function $q \in \Sch(\R)$, define the  differential operator
\begin{equation}\label{dir1}
\mathcal{L}_q = i\sigma_3\partial_x + i(q\sigma_- - \ov{q} \sigma_+),
\end{equation}
where $\sigma_3$, $\sigma_{\pm}$  
\begin{equation}\label{cmatrices}
\sigma_3 =  \begin{pmatrix}1 & 0 \\ 0 & -1\end{pmatrix}, 
\qquad
\sigma_+ = \begin{pmatrix}0&1\\0&0\end{pmatrix},
\qquad
\sigma_- = \begin{pmatrix}0&0\\1&0\end{pmatrix}.
\end{equation}
Let us also define 
$$
E(x, k) = e^{\frac{k}{2i}x\sigma_3} = \begin{pmatrix}e^{\frac{k}{2i}x} & 0 \\ 0 & e^{-\frac{k}{2i}x}\end{pmatrix}. 
$$
In the free case when $q=0$, the matrix-function $E$ solves $\mathcal{L}_0E={\frac{k}{2}} E,\, E(0,k)=\idm$. 
Since $q \in \Sch(\R)$, it decays at infinity fast and therefore one can find two solutions
 $T_{\pm} = T_{\pm}(x,\xi,q)$ such that 
\begin{equation}\label{eq1lol}
\mathcal{L}_q T_{\pm} = \tfrac{\xi}{2}T_{\pm}, \qquad T_{\pm} = E(x,\xi) + o(1), \qquad x \to \pm\infty,
\end{equation}
for every $\xi\in \R$.
These solutions are called the {\it Jost solutions} for $\mathcal{L}_q$. Since both $T_{+}$ and $T_{-}$ solve the same ODE, they must satisfy
\begin{equation}\label{fix1}
T_{-}(x,\xi,q) =  T_{+}(x,\xi,q)T(\xi,q), \qquad x \in \R, \qquad \xi\in \R,
\end{equation}
where the matrix $T = T(\xi,q)$ does not depend on $x \in \R$.  One can show that it has the form
\begin{equation}\label{eq4}
T(\xi,q) = 
\begin{pmatrix}
a(\xi,q) & \ov{b(\xi,q)} \\
b(\xi,q) & \ov{a(\xi,q)}
\end{pmatrix}, \qquad \det T = |a|^2 - |b|^2 = 1.
\end{equation}
The matrix $T$ is called the {\it reduced transition matrix} for $\mathcal{L}_q$, and the ratio $\rc = b/a$ is called the {\it reflection coefficient} for $\mathcal{L}_q$ and $1/a$ is called the {\it transmission coefficient}. One can obtain $T$ in a different way: let $Z= Z(x, k,q)$, $x \in \R$, $k \in \C$ be the fundamental matrix  for $\mathcal{L}_q$ with spectral parameter $\tfrac k2$, that is, 
\begin{equation}\label{eq2}
\mathcal{L}_q Z = \tfrac{k}{2}Z, \qquad Z(0, k,q) = \idm. 
\end{equation}
Then, we have
%$T^{-1}_\pm(0,k)=T^{-1}_\pm(\xi,k)Z_q(\xi,k)$ 
\begin{equation}\label{lk1}
Z(x,k,q) = T_\pm(x,k,q)T^{-1}_\pm(0,k,q)
\end{equation}
and the pointwise limits
\begin{equation}\label{eq3}
T^{-1}_\pm(0,\xi,q) = \lim_{x \to \pm\infty}E^{-1}(x,\xi)Z(x,\xi,q)
\end{equation}
exist for every $\xi \in \R$. Moreover, we have $T(\xi,q) = T_+^{-1}(0,\xi,q)T_-(0,\xi,q)$ on $\R$. 

\medskip

The coefficients $a,b$, and $\rc$ were defined for $\xi\in \R$ and they satisfy $|a|^2=1+|b|^2$, $1-|\rc|^2=|a|^{-2}$ for these $\xi$. However, one can show that $a(\xi,q)$ is the boundary value of the outer function defined in $\C^+$ by the formula 
\[
a(k,q)=\exp \left(\frac{1}{\pi i}\int_{\R}\frac{1}{\xi-k} \log|a(\xi,q)|\,d\xi\right), \quad k\in \C^+,
\]
which, in view of identity $1-|\rc|^2=|a|^{-2}=(1+|b|^2)^{-1}$ on $\R$, can be written as
\begin{equation}\label{ar}
a(k,q)=\exp \left(-\frac{1}{2\pi i}\int_{\R}\frac{1}{\xi-k} \log(1-|\rc(\xi)|^2)d\xi\right)=\exp \left(\frac{1}{2\pi i}\int_{\R}\frac{1}{\xi-k} \log(1+|b(\xi,q)|^2)d\xi\right)\,.
\end{equation}
That shows, in particular, that $b$ defines both $a$ and $\rc$, and $\rc$ defines $a$ and $b$. The {\it sum rule} for the Dirac equation reads
\[
\pi \int_\R |q(x)|^2dx=\int_\R \log |a(\xi,q)|d\xi\,.
\]
The map $q(x)\stackrel{[r]}{\mapsto}\rc(k,q)$ is called the direct scattering transform and its inverse is called the inverse scattering transform. These maps are well-studied when $q\in \Sch(\R)$. In particular, we have the following result (see  Theorem 2.1 in \cite{BD}, the proof can be found in \cite{FD}):
\begin{Thm}\label{Thm1}
The map $q \stackrel{[r]}{\mapsto} \rc$ is a bijection from $\Sch(\R)$ onto the set of complex-valued functions $\{\rc\in\Sch(\R), \|\rc\|_{L^\infty(\R)}<1\}$. 
\end{Thm}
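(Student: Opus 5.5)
The plan is to treat the direct and inverse transforms separately, as in \cite{FD}. For the direct map, take $q\in\Sch(\R)$ and build the Jost solutions $T_\pm$ of \eqref{eq1lol} from the Volterra integral equations
\[
T_\pm(x,\xi)=E(x,\xi)-\int_x^{\pm\infty}E(x-y,\xi)\,\sigma_3\,(q\sigma_--\ov q\sigma_+)(y)\,T_\pm(y,\xi)\,dy,
\]
which converge by the usual iteration since $q\in L^1$. Their columns extend analytically to the appropriate half-planes, so $a(\cdot,q)$ extends to $\C^+$. Differentiating in $\xi$ brings down powers of $y$, which are absorbed by the rapid decay of $q$, so $a,b\in C^\infty(\R)$. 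Repeated integration by parts in $y$, trading powers of $\xi$ for derivatives of $q$, shows that $b$ and all its derivatives decay faster than any power; hence $b\in\Sch(\R)$ and $a-1$ decays similarly. By \eqref{eq4}, $|a|^2=1+|b|^2\ge 1$ on $\R$. Since $|b|$ is bounded on $\R$, this gives $\|\rc\|_\infty=\sup|b|/\sqrt{1+|b|^2}<1$. Finally, $\rc=b/a\in\Sch(\R)$ because $a$ is smooth, bounded away from zero, and tends to $1$ with all derivatives.

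For injectivity and surjectivity I would use the Riemann--Hilbert formulation. Given $\rc\in\Sch(\R)$ with $\|\rc\|_\infty<1$, build $a$ from \eqref{ar}; it is outer and zero-free in $\C^+$, so there are no bound states. Then pose the $2\times2$ problem for $M(x,k)$: $M$ is analytic off $\R$, $M\to\idm$ as $k\to\infty$, and $M_+=M_-V_x$ on $\R$ with
\[
V_x(\xi)=\begin{pmatrix}1-|\rc(\xi)|^2 & -\ov{\rc(\xi)}e^{-i\xi x}\\ \rc(\xi)e^{i\xi x}&1\end{pmatrix}
\]
(the precise exponent depends on the normalization of $E$). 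The Hermitian part of $V_x$ is positive definite exactly because $|\rc|<1$. Zhou's vanishing lemma then gives unique solvability for every $x$: if $N$ solves the problem with $N\to0$ at infinity, integrating $N_+N_-^{*}$ over $\R$ forces $N\equiv0$. One recovers $q(x)$ from the $k^{-1}$ coefficient of $M$ at infinity. The standard dressing argument shows that $M$ combined with $E$ solves the Dirac equation with this $q$ and has reflection coefficient $\rc$. Uniqueness of the Riemann--Hilbert solution gives injectivity of $q\mapsto\rc$, since the Jost solutions of any $q$ yield a solution of the same problem.

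The main obstacle is showing that the reconstructed $q$ lies in $\Sch(\R)$, rather than merely in $L^2$. I would first prove Zhou-type weighted estimates: the inverse map sends $\rc\in H^{j,m}$ (with $\partial^{\le j}$ and weight $\langle\xi\rangle^m$ in $L^2$) into $q\in H^{m,j}$. The mechanism is that differentiating $q$ in $x$ corresponds to multiplying $\rc$ by powers of $\xi$, and $x$-decay of $q$ corresponds to smoothness of $\rc$. These estimates follow from the Beals--Coifman singular integral equation for $M_\pm-\idm$, whose operator $\idm-C_{w_x}$ has an inverse bounded uniformly in $x$ by the positivity above. The decay in $x$ then comes from deforming the contour into $\C^\pm$ after splitting $\rc$ into a rational approximant plus a small smooth remainder, giving decay like $\langle x\rangle^{-j}$. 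Intersecting over all $j,m$ gives $q\in\Sch(\R)$ and completes the bijection.
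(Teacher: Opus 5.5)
Your outline is correct, but it is not the route the paper relies on. The paper gives no argument of its own: it quotes \cite{BD}, Theorem~2.1, and refers to \cite{FD}, where the classical proof uses transformation operators and the Gelfand--Levitan--Marchenko (GLM) equations, tied together by the matrix Riemann problem.

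In that approach the Jost solutions are written as $E(x,\xi)+\int_x^{\pm\infty}\Gamma_\pm(x,y)E(y,\xi)\,dy$ with smooth, rapidly decaying kernels. So $b\in\Sch(\R)$ becomes a Fourier-transform statement, not an integration by parts inside iterated Volterra integrals. The inverse problem is solved by GLM equations on $[x,\infty)$ and $(-\infty,x]$. Their kernels are Fourier transforms of the right and left reflection coefficients, hence Schwartz. Their unique solvability comes from $\|\rc\|_\infty<1$, because the relevant Hankel-type operators have norm at most $\|\rc\|_\infty$. Smoothness and rapid decay of $q$ on each half-line can then be read off directly from these equations.

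Your Beals--Coifman/Zhou approach works on the whole line at once, with the vanishing lemma replacing the contraction estimate. It also yields more: a quantitative bijection $H^{i,j}\leftrightarrow H^{j,i}$ between weighted Sobolev spaces, from which the Schwartz statement follows by intersecting over all indices. The cost is the $x$-decay step, where you split $\rc$ into a continuable part plus a small remainder and deform contours. That step is markedly heavier than in the GLM route, where decay comes essentially for free.

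There is one correction and one omission.

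\textbf{Correction.} $a-1$ is not rapidly decreasing. By \eqref{ar} and the sum rule, $a(\xi)=1+i\|q\|_2^2\,\xi^{-1}+O(\xi^{-2})$, so $a$ is only a symbol of order zero. This is harmless: $|a|\ge 1$ together with bounded derivatives of $a$ is all you need for $\rc=b/a\in\Sch(\R)$.

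\textbf{Omission.} For injectivity, you should state that $a(\cdot,q)$ has no zeros in $\C^+$ for every $q\in\Sch(\R)$. This follows from self-adjointness of $\mathcal{L}_q$, or from the outer representation of $a$. Without it, the Jost solutions of an arbitrary $q$ could give a Riemann--Hilbert solution with poles, and the uniqueness argument would not apply.
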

In particular, each $b(k)\in \Sch(\R)$ uniquely defines $q\in \Sch(\R)$ such that $b(k)=b(k,q)$.
That scattering transform has some symmetries (see \cite{BD} and \cite{kov1}, p.241): 

\begin{Lem}\label{sym} If $q\in \Sch(\R)$ and $\xi \in \R$, then
\begin{eqnarray*}
 \text{\it (dilation):} & \qquad \rc(\xi,\mu q(\mu x))=\rc(\mu^{-1}\xi,q(x)), \quad \mu>0\,,
\\
\text{\it (conjugation):}& \qquad \rc(\xi,\overline{q}(x))={\ov{\rc(-\xi,q)}}\,,
\\
\text{\it (translation):}& \qquad \rc(\xi,q(x-\ell))=\rc(\xi,q(x))e^{-i\xi \ell}, \quad \ell\in \R\,,
\\
\text{\it (modulation):}&  \qquad \rc(\xi,e^{-i\beta x}q(x))=\rc(\xi+\beta,q(x)), \quad \beta\in \R\,,
\\
\text{\it (rotation):}&  \qquad \rc(\xi,\zeta q(x))={\zeta}\rc(\xi,q(x)), \quad \zeta\in \C, \; |\zeta|=1\,.
\end{eqnarray*}
\end{Lem}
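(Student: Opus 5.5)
The plan is to derive every identity from one gauge or scaling relation between fundamental matrices. Each relation passes to the Jost solutions by uniqueness of the asymptotics in \eqref{eq1lol}, then to $T$ via \eqref{fix1}, and finally to $\rc=b/a$ read off from \eqref{eq4}.

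First rewrite \eqref{eq2} as a first-order system. Multiply by $-i\sigma_3$ and use $\sigma_3\sigma_-=-\sigma_-$ and $\sigma_3\sigma_+=\sigma_+$ to get
\[
\partial_x Z=\tfrac{k}{2i}\sigma_3 Z+(q\sigma_-+\ov{q}\sigma_+)Z .
\]
The same system is satisfied by $T_\pm$ with $k=\xi$, and $E(x,\xi)$ solves it when $q=0$. The general principle is this: if $Y(x)=G(x)\,T_\pm(\phi(x),\xi',q)\,H$ solves the system for the new potential with parameter $\xi$, and $Y=E(x,\xi)+o(1)$ as $x\to\pm\infty$, then $Y=T_\pm(x,\xi,\text{new }q)$. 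In that case \eqref{fix1} gives the new $T$ as $H^{-1}T(\xi',q)H$ whenever $G$ is the same for $T_+$ and $T_-$.

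The five cases are handled one at a time.

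\emph{Dilation.} Check that $T_\pm(\mu x,\mu^{-1}\xi,q)$ solves the system for $\mu q(\mu x)$ at $\xi$. Since $E(\mu x,\mu^{-1}\xi)=E(x,\xi)$, uniqueness gives $T(\xi,\mu q(\mu\cdot))=T(\mu^{-1}\xi,q)$.

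\emph{Translation.} Set $Y=T_\pm(x-\ell,\xi,q)E(\ell,\xi)$. It solves the system for $q(x-\ell)$, and $E(x-\ell,\xi)E(\ell,\xi)=E(x,\xi)$, so the new $T$ is $E(\ell,\xi)^{-1}T(\xi,q)E(\ell,\xi)$. With $E(\ell,\xi)=\mathrm{diag}(e^{-i\xi\ell/2},e^{i\xi\ell/2})$, the diagonal entry $a$ is unchanged and the $(2,1)$ entry becomes $e^{-i\xi\ell}b$.

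\emph{Modulation.} Set $D(x)=\mathrm{diag}(e^{i\beta x/2},e^{-i\beta x/2})=E(x,-\beta)$. Two facts are needed:
\[
D(q\sigma_-+\ov q\sigma_+)D^{-1}=e^{-i\beta x}q\sigma_-+e^{i\beta x}\ov q\sigma_+,\qquad D'D^{-1}=\tfrac{i\beta}{2}\sigma_3 .
\]
Together they show that $DT_\pm(x,\xi,q)$ solves the system for $e^{-i\beta x}q$ with parameter $\xi-\beta$. It also has the asymptotics $E(x,-\beta)E(x,\xi)=E(x,\xi-\beta)$. Hence $T(\xi-\beta,e^{-i\beta x}q)=T(\xi,q)$, which is the modulation formula after renaming $\xi$.

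\emph{Rotation.} Conjugating by the constant matrix $\mathrm{diag}(1,\zeta)$ turns $q,\ov q$ into $\zeta q,\ov\zeta\ov q$ and commutes with $E$. Thus $a$ is unchanged and $b\mapsto\zeta b$.

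\emph{Conjugation.} For real $\xi$, take complex conjugates of the system. This gives
\[
\ov{T_\pm}'=\tfrac{-\xi}{2i}\sigma_3\ov{T_\pm}+(\ov q\sigma_-+q\sigma_+)\ov{T_\pm},\qquad \ov{E(x,\xi)}=E(x,-\xi).
\]
So $\ov{T_\pm(x,\xi,q)}=T_\pm(x,-\xi,\ov q)$, hence $T(-\xi,\ov q)=\ov{T(\xi,q)}$. This yields $\rc(\xi,\ov q)=\ov{\rc(-\xi,q)}$.

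Justifying uniqueness of the Jost solutions for Schwartz $q$ is routine: a solution that is $o(1)$ at $\pm\infty$ vanishes, by the Volterra integral equation. The only real obstacle is sign bookkeeping, in particular the sign convention in $E$ and the resulting direction of the shift in the modulation case. I would double-check that case against the $(2,1)$ entry computed explicitly.
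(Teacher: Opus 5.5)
Your proposal is correct. All five transformations check out against the paper's conventions $E(x,\xi)=e^{\frac{\xi}{2i}x\sigma_3}$ and $a=T_{11}$, $b=T_{21}$ in \eqref{eq4}. In particular, the modulation shift goes in the right direction, and your translation rule $T(\xi,q(\cdot-\ell))=E(\ell,\xi)^{-1}T(\xi,q)E(\ell,\xi)$ matches the phase $e^{\xi R/i}$ appearing in \eqref{jh0}.

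The paper gives no proof of this lemma and only cites \cite{BD} and \cite{kov1}. Your argument — gauge or scaling transformations of the ODE, passed to the Jost solutions by uniqueness of their asymptotics — is the standard one. It is the same manipulation the paper applies to the Krein fundamental matrix $X$ to get the modulation and dilation identities in the proof of Theorem \ref{ks12}.
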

They imply the corresponding formulas for $a(\xi,q)$ and $b(\xi,q)$, too.
One advantage of working with Dirac operators on the whole line is the presence of translation symmetry, which we do not have for the Krein system on $\R^+$.
The other benefit of the Dirac model is Theorem \ref{Thm1} which does not require the scattering data $\rc$ to be analytic in $\C^+$.

We need to outline the connection between the Krein systems and the Dirac operator $\mathcal{L}_q$ (see \cite{BD},~p.225). Given $q$, let 
\[
A_+(x)=-\tfrac 12\overline{q(\tfrac x2)}, \quad A_-(x)=\tfrac 12{q(-\tfrac x2)}\,,\quad  x\ge 0\,.
\]
For these two Krein systems, we consider $\frak{a}(k,A_+),\frak{b}(k,A_+)$ and $\frak{a}(k,A_-),\frak{b}(k,A_-)$\,. Then, we have (check \cite{BD}, p.227 and \cite{kov1}, p.241)
\begin{equation}\label{trans1}
\left\{\begin{array}{c}a({2}k,q)=\frak{a}(k,A_+) \frak{a}(k,A_-) - \frak{b}(k,A_+)\frak{b}(k,A_-),\,\\ b({2}\xi,q)= \frak{a}(\xi,A_-)\ov{\frak{b}(\xi,A_+)} - \frak{b}(\xi,A_-)\ov{\frak{a}(\xi,A^+)}\,,
\end{array}\right.
\end{equation}
where $
 k\in \C^+,\,\xi\in \R$.\medskip

The existence of Jost functions can be proved for $q\in L^1(\R)$, too. We will need two results.
\begin{lemma} Consider $v_1,v_2\in L^1(\R)$, both supported inside $[-r,r], r>0$. Take $R\ge 2r$ and let $Q=v_1(x)+v_2(x-R)$. Then, for $\xi\in \R$, we get
\begin{equation}\label{jh0}
\left\{
\begin{array}{c}
a(\xi,Q)=a(\xi,v_1)a(\xi,v_2)+b(\xi,v_1)\overline{b(\xi,v_2)}e^{-\xi R/i},\\ b(\xi,Q)=\overline{a(\xi,v_2)}b(\xi,v_1)+{a(\xi,v_1)}b(\xi,v_2)e^{\xi R/i}\,.
\end{array}
\right.
\end{equation}
\end{lemma}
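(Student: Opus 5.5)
The plan is to prove the composition rule $T(\xi,Q)=T(\xi,v_2(\cdot-R))\,T(\xi,v_1)$ directly from the definition \eqref{fix1}, and then compute the transition matrix of the translated potential explicitly. Since all potentials here are supported on compact sets, I will not use Lemma~\ref{sym} (stated for $\Sch(\R)$). Instead I will use the elementary fact that the Jost solutions are exact free solutions outside the support.

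\emph{Step 1: Jost solutions for compactly supported $L^1$ potentials.} If $v\in L^1(\R)$ is supported in $[\alpha,\beta]$, then $E(x,\xi)$ solves $\mathcal L_v Y=\tfrac{\xi}{2}Y$ for $x>\beta$ and for $x<\alpha$. Hence $T_+(x,\xi,v)=E(x,\xi)$ exactly for $x\ge\beta$, and $T_-(x,\xi,v)=E(x,\xi)$ exactly for $x\le\alpha$. Both extend to all of $\R$ as solutions of the ODE with locally integrable coefficients. Relation \eqref{fix1} then gives $T_-(x,\xi,v)=E(x,\xi)\,T(\xi,v)$ for $x\ge \beta$.

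\emph{Step 2: translation.} Set $v^R(x):=v(x-R)$. Translation commutes with $i\sigma_3\partial_x$, and $E(x-R,\xi)E(R,\xi)=E(x,\xi)$. Therefore $T_\pm(x-R,\xi,v)E(R,\xi)$ are the Jost solutions of $v^R$. Plugging into \eqref{fix1} gives
\[
T(\xi,v^R)=E(R,\xi)^{-1}\,T(\xi,v)\,E(R,\xi),\qquad E(R,\xi)=\mathrm{diag}\bigl(e^{-i\xi R/2},e^{i\xi R/2}\bigr).
\]
So $a(\xi,v^R)=a(\xi,v)$ and $b(\xi,v^R)=b(\xi,v)e^{-i\xi R}$, with the conjugate entry acquiring the factor $e^{i\xi R}$.

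\emph{Step 3: gluing.} Since $R\ge 2r$, the supports $[-r,r]$ of $v_1$ and $[R-r,R+r]$ of $v_2^R$ are separated by the gap $[r,R-r]$, where $Q\equiv0$.
\begin{itemize}
\item For $x\le R-r$ the potential $Q$ coincides with $v_1$, so $T_-(x,\xi,Q)=T_-(x,\xi,v_1)$. By Step 1, on the gap this equals $E(x,\xi)T(\xi,v_1)$.
\item On the same gap, $T_-(x,\xi,v_2^R)=E(x,\xi)$.
\item Hence $T_-(\cdot,\xi,Q)$ and $T_-(\cdot,\xi,v_2^R)\,T(\xi,v_1)$ solve the same equation for $x\ge r$ and agree on the gap. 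By uniqueness for the Cauchy problem they coincide for all $x\ge r$.
\item For $x\ge R+r$ this gives $T_-(x,\xi,Q)=E(x,\xi)\,T(\xi,v_2^R)\,T(\xi,v_1)$. Since $T_+(x,\xi,Q)=E(x,\xi)$ there, \eqref{fix1} yields $T(\xi,Q)=T(\xi,v_2^R)T(\xi,v_1)$.
\end{itemize}

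\emph{Step 4: multiply the matrices.} By Step 2,
\[
T(\xi,Q)=\begin{pmatrix} a_2 & \overline{b_2}e^{i\xi R}\\ b_2e^{-i\xi R} & \overline{a_2}\end{pmatrix}\begin{pmatrix} a_1 & \overline{b_1}\\ b_1 & \overline{a_1}\end{pmatrix},
\]
where $a_m=a(\xi,v_m)$ and $b_m=b(\xi,v_m)$. The $(1,1)$ entry is $a_1a_2+b_1\overline{b_2}e^{i\xi R}$, and the $(2,1)$ entry is $\overline{a_2}b_1+a_1b_2e^{-i\xi R}$. Since $e^{-\xi R/i}=e^{i\xi R}$ and $e^{\xi R/i}=e^{-i\xi R}$, these are exactly the two formulas in \eqref{jh0}.

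I expect no serious obstacle. The only points needing care are the sign and normalization conventions: the factor $\tfrac k2$ in \eqref{eq2}, the orientation of \eqref{fix1} (which fixes the order $T(v_2^R)T(v_1)$ rather than the reverse), and the sign of the phase in $E(R,\xi)$. I would double-check these by comparing the $b$-formula with the translation rule for $\rc$ in Lemma~\ref{sym}, which it reproduces when $v_1=0$.
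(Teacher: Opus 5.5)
Your proof is correct and is essentially the paper's own argument: the paper says the identity is immediate from the definition of $T$ and the group property of the fundamental matrix (uniqueness for the Cauchy problem). Your Steps 1--3 spell exactly this out, including the conjugation $T(\xi,v^R)=E(R,\xi)^{-1}T(\xi,v)E(R,\xi)$ that produces the phases; the paper also notes an alternative derivation via \eqref{jop} with $\ell=2$ and \eqref{trans1}.
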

\begin{proof}
This is immediate from the definition and the group property of the fundamental matrix. Alternatively, one can see it from \eqref{jop}, where $\ell=2$, and \eqref{trans1}.
\end{proof}
\begin{lemma} \label{kot1} Consider $v_1,v_2\in L^1(\R)$. Take $R\ge 0$ and let $Q=v_1(x)+v_2(x-R)$. Then, 
\begin{eqnarray}\label{jh1}
\left\{
\begin{array}{c}
a(\xi,Q)=a(\xi,v_1)a(\xi,v_2)+b(\xi,v_1)\overline{b(\xi,v_2)}e^{-\xi R/i}+O(\eps^*(R))\\ b(\xi,Q)=\overline{a(\xi,v_2)}b(\xi,v_1)+{a(\xi,v_1)}b(\xi,v_2)e^{\xi R/i}+O(\eps^*(R))
\end{array}
\right.,\\
\eps^*(R):=\left(\int_{|x|>R/2}|v_1(s)|ds+\int_{|x|>R/2}|v_2(s)|ds\right)\exp(C(\|v_1\|_1+\|v_2\|_1))\nonumber
\end{eqnarray}
uniformly  in $\xi\in \R$.
\end{lemma}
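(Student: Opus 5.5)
The plan is to reduce to the exactly decoupled case, Lemma with \eqref{jh0}, by truncating the two potentials, and to control the truncation error with a stability estimate for the transition matrix in $L^1$.

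\textbf{Step 1: truncation.} Set $v_1^{(R)}:=v_1\chi_{[-R/2,R/2]}$ and $v_2^{(R)}:=v_2\chi_{[-R/2,R/2]}$. Let $Q^{(R)}:=v_1^{(R)}(x)+v_2^{(R)}(x-R)$. Then $v_1^{(R)}$ is supported in $[-R/2,R/2]$, and $v_2^{(R)}(\cdot-R)$ is supported in $[R/2,3R/2]$. These supports overlap at most at one point, so the group property of the fundamental matrix applies exactly as in the previous lemma. (Alternatively, apply that lemma with $r=R/2$ after noting that a translation by $R/2$ only multiplies $b$ by a unimodular phase, by the translation symmetry in Lemma~\ref{sym}.) This gives \eqref{jh0} for $a(\xi,Q^{(R)})$ and $b(\xi,Q^{(R)})$ in terms of $a(\xi,v_i^{(R)})$ and $b(\xi,v_i^{(R)})$.

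\textbf{Step 2: $L^1$-stability of the transition matrix.} The key estimate is that for $u,w\in L^1(\R)$ and $\xi\in\R$,
\[
\|T(\xi,u)-T(\xi,w)\|\lesssim \|u-w\|_1\exp\bigl(C(\|u\|_1+\|w\|_1)\bigr).
\]
To prove it, pass to the interaction picture $Y(x)=E^{-1}(x,\xi)Z(x,\xi,u)$. For real $\xi$, $Y$ solves $Y'=V_u(x)Y$, where $\|V_u(x)\|\le |u(x)|$ because the conjugating exponentials are unimodular. The matrix $T(\xi,u)$ is the ordered exponential of $V_u$ over $\R$ (limits as in \eqref{eq3}). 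Gronwall gives $\|Y\|\le e^{\|u\|_1}$ for the solution and for its inverse. For the difference of the two propagators, Duhamel's formula gives
\[
Y_u-Y_w=\int Y_u(\cdot\leftarrow s)\,(V_u-V_w)(s)\,Y_w(s)\,ds ,
\]
which is bounded by $\|u-w\|_1e^{C(\|u\|_1+\|w\|_1)}$. This is the same type of argument as Lemma~\ref{ell1} in the Appendix, and it holds uniformly in $\xi\in\R$.

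\textbf{Step 3: combine.} First, $\|Q-Q^{(R)}\|_1\le \int_{|x|>R/2}|v_1|+\int_{|x|>R/2}|v_2|$, and $\|Q\|_1\le\|v_1\|_1+\|v_2\|_1$. So by Step 2, $a(\xi,Q)$ and $b(\xi,Q)$ differ from $a(\xi,Q^{(R)})$ and $b(\xi,Q^{(R)})$ by $O(\eps^*(R))$. Second, $a(\xi,v_i)-a(\xi,v_i^{(R)})$ and $b(\xi,v_i)-b(\xi,v_i^{(R)})$ are also $O(\eps^*(R))$. Third, all of $|a|$ and $|b|$ are bounded by $e^{C(\|v_1\|_1+\|v_2\|_1)}$. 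Since the right-hand side of \eqref{jh0} is bilinear in these bounded quantities, replacing the truncated coefficients by the untruncated ones costs $O(\eps^*(R))$ after adjusting $C$ in the exponent. This yields \eqref{jh1}.

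The main obstacle is Step 2. One has to make sure the comparison is done at the level of $T$ defined through limits at $\pm\infty$ for merely $L^1$ potentials, uniformly in $\xi$. This is handled by working with $E^{-1}Z$ for real $\xi$, where there is no exponential growth, and by passing to the limit $x\to\pm\infty$ in the Gronwall and Duhamel bounds, which are uniform in $x$.
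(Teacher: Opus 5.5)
Your proposal is correct and is essentially the paper's proof: truncate $v_1,v_2$ to $\{|x|<R/2\}$, apply \eqref{jh0} exactly to $Q^{(R)}$, and absorb the truncation errors using the $L^1$-stability bound \eqref{stab1} together with the uniform bounds $|a|,|b|\lesssim \exp(C\|v_j\|_1)$. Your Step 2 re-derives \eqref{stab1}, which the paper proves in its Appendix via the Volterra equation for $E^{-1}T_+$ rather than $E^{-1}Z$; the translation remark in Step 1 is unnecessary, since \eqref{jh0} applies directly with $r=R/2$.
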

\begin{proof}
We can take  $v_1^{(R)}:=v_1\cdot\chi_{|x|<R/2}, v_2^{(R)}:=v_2\cdot\chi_{|x|<R/2}$ and use \eqref{stab1} to write
\begin{equation}\label{qua1}
|a(\xi,v_j^{(R)})-a(\xi,v_j)|\lesssim \eps_j(R), \quad |b(\xi,v_j^{(R)})- b(\xi,v_j)|\lesssim \eps_j(R), \quad j\in \{1,2\}
\end{equation}
uniformly in $\xi\in \R$, where
\[
\eps_j(R):=\left(\int_{|x|>R/2}|v_j(s)|ds\right)\exp(C\|v_j\|_1)\,.
\]
Similarly, if one defines $Q^{(R)}:=v_1^{(R)}(x)+v_2^{(R)}(x-R)$, then \eqref{stab1} provides
\begin{equation}\label{qua2}
|a(\xi,Q^{(R)})-a(\xi,Q)|\lesssim \eps^*(R), \quad |b(\xi,Q^{(R)})- b(\xi,Q)|\lesssim \eps^*(R)\,.
\end{equation}
We can apply the previous lemma to $Q^{(R)}$. Then, substituting the obtained bounds along with (check \eqref{sto} and \eqref{cod7}) the estimates
\[
|a_j(\xi,v_j)|\lesssim \exp(C\|v_j\|_1), \quad |b_j(\xi,v_j)|\lesssim \exp(C\|v_j\|_1)\,,
\]
into \eqref{jh0}, we arrive at \eqref{jh1}.
\end{proof}

The strong maximal function can be defined for the Dirac operator $\mathcal{L}_q$ as follows. Let $T_1,T_2\in \R, T_1<T_2$.  Then,
\[
\mathrm{M}(\xi,q):=\sup_{k\in S_\xi, T_1,T_2}|\arg\, a(k,q\cdot \chi_{T_1\le x\le T_2})|\,,
\]
where $S_\xi$ is a Stolz angle at $\xi\in \R$. We can let $q\in L^2(\R)$ and ask the same questions we asked for Krein systems, i.e., whether the maximal function $\mathrm{M}$ is well-defined and whether $a(k,q\cdot \chi_{T_1\le x\le T_2})\to a(k,q)$ pointwise when $T_1\to -\infty, T_2\to +\infty$. In fact, taking $q=0$ for $x\le 0$ in the formula \eqref{trans1}, we already see that the weak-type bound for $\mathrm{M}(\xi,q)$ fails due to Theorem \ref{ks12}. 
If we consider another maximal function
\[
\mathrm{M}_w(\xi,q)=\sup_{T_1,T_2}|\arg\, a(\xi,q\cdot \chi_{T_1\le x\le T_2})|\,, \quad \xi\in \R\,,
\]
then it is clear that $\mathrm{M}_w(\xi,q)\le \mathrm{M}(\xi,q)$.
The next result shows that these maximal functions $\mathrm{M}_w(\xi,q)$ and $\mathrm{M}(\xi,q)$ might  not be even defined for $q\in L^2(\R)$.

\begin{theorem} There is $q\in L^2(\R)$ such that $\mathrm{M}_w(\xi,q)=+\infty$ for $\xi\in [\frac 12,1]$.
\end{theorem}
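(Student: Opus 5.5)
Since $\mathrm{M}_w$ only sees real $\xi$, we cannot reuse the Stolz-angle argument of Theorem~\ref{ks12}. Instead we will build blocks whose transmission coefficients already rotate a lot on the real line, and then glue infinitely many of them with summable $L^2$ norms.

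\emph{Step 1: a single block with large rotation at real $\xi$.} Fix $\nu$ large and a small $\delta>0$. Using Theorem~\ref{Thm1}, we prescribe $b$ directly instead of $q$. Take a fixed bump $\varphi\in C_c^\infty(-\tfrac14,\tfrac14)$, $0\le\varphi\le1$, and put
\[
b_\nu(\xi)=\delta\sum_{p=0}^{\nu-1}\varphi(\nu\xi-p)\quad\text{on }[0,1/2],
\]
and $b_\nu=0$ elsewhere. This is the analogue of the piled bumps of Figure~3, and it lies in $\Sch(\R)$. Theorem~\ref{Thm1} gives $q_\nu\in\Sch(\R)$ with $b(\cdot,q_\nu)=b_\nu$. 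By the sum rule,
\[
\pi\|q_\nu\|_2^2=\tfrac12\int\log(1+|b_\nu|^2)\sim\delta^2,
\]
uniformly in $\nu$. The boundary argument is given by \eqref{ar}:
\[
\arg a(\xi,q_\nu)=\tfrac{1}{2\pi}\,\mathrm{p.v.}\!\int\frac{\log(1+|b_\nu(s)|^2)}{s-\xi}\,ds,
\]
which is the Hilbert transform of a function that is $\sim\delta^2$ on average over $[0,1/2]$ and vanishes on $(1/2,\infty)$. The computation in the proof of Lemma~\ref{llog} (the third integral) gives, for $\xi\in[\tfrac12,1]$,
\[
|\arg a(\xi,q_\nu)|\gtrsim \delta^2\log\frac{1}{|\xi-\tfrac12|+\nu^{-1}}-C\delta^2.
\]
This is large only near $\xi=1/2$. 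To cover all of $[\tfrac12,1]$, we instead use a single-sided log singularity at every point. The cleanest choice is to let $\log(1+|b|^2)$ be a smooth truncation at scale $\nu^{-1}$ of $\delta^2\chi_{[0,1/2]}$ modulated so that $\xi$ runs through $[\tfrac12,1]$. By the modulation symmetry of Lemma~\ref{sym}, for each dyadic translate $\beta$ we get a block $q_{\nu,\beta}=e^{-i\beta x}q_\nu$ with large argument near $\xi=\tfrac12+\beta$.

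\emph{Step 2: truncations isolate each block.} The key point is that $\mathrm{M}_w$ allows arbitrary truncations $\chi_{[T_1,T_2]}$. Since $q_{\nu}$ is Schwartz, we cut it to a compact interval with $L^1$ error tiny (using \eqref{stab1}). We then place the blocks far apart: $q=\sum_m q_m(x-R_m)$, with $\|q_m\|_2\sim\delta_m$, $\sum\delta_m^2<\infty$, and $\nu_m$ chosen so that $\delta_m^2\log\nu_m\to\infty$. Choosing $[T_1,T_2]$ to contain exactly one block and using translation invariance of $a$ (Lemma~\ref{sym}) gives
\[
\mathrm{M}_w(\xi,q)\ge|\arg a(\xi,q_m)|.
\]
The other blocks drop out because the truncation simply removes them. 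So Lemma~\ref{kot1} is needed only to control the truncation error of each block, not any interaction between blocks. A finite union of modulated copies covering $[\tfrac12,1]$ at level $m$ may be placed inside one block; Lemma~\ref{kot1} with huge separations then gives $a\approx\prod a(\cdot,\cdot)$ up to oscillating cross terms $b\bar b e^{-i\xi R}$ of size $\delta_m^2$. Since these are bounded, they do not spoil the $\log$ growth.

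\emph{Main obstacle.} The real difficulty is Step 1: we need $|\arg a(\xi,q_m)|\ge L_m\to\infty$ for \emph{every} $\xi\in[\tfrac12,1]$, not just almost every, while keeping $\|q_m\|_2$ summable in square. The logarithmic singularity of the Hilbert transform of $\delta^2\chi_I$ at an endpoint is strong only at that endpoint. We will handle this by taking, at level $m$, about $\nu_m$ modulated copies with endpoints on a $\nu_m^{-1}$-grid. Each copy has $L^2$ mass $\delta_m^2/\nu_m$ (dilated by $\nu_m$, as in \eqref{lo8}). The logarithms then pile up exactly as in Lemma~\ref{llog}, giving a lower bound of $\delta_m^2\log\nu_m$ uniformly on $[\tfrac12,1]$. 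Arranged in one well-separated block, these copies give $a\approx\prod_\ell a_\ell$ on $\R$, with interaction errors controlled by Lemma~\ref{kot1}. One must check that the interaction errors, which are of size $|b_\ell||b_{\ell'}|\lesssim\delta_m^2$, do not accumulate over about $\nu_m$ factors. This holds because at each fixed $\xi$ only $O(1)$ of the $b_\ell$ are non-negligible (they are localized bumps in $\xi$). Picking, for example, $\delta_m=1/m$ and $\nu_m=e^{m^3}$ completes the proof.
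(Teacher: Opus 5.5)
There is a genuine gap, and it sits in the step you flag as the main obstacle. In your final version, each level-$m$ block consists of $\sim\nu_m$ single-bump copies of mass $\delta_m^2/\nu_m$. Step~2 only uses truncations $[T_1,T_2]$ that contain a \emph{whole} block. So the quantity you actually bound is $|\arg a(\xi,\text{block})|\approx|\arg\prod_\ell a_\ell(\xi)|$, the argument of the \emph{full} product.

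That argument is not large on $[\tfrac12,1]$. The function $\log|\prod_\ell a_\ell|$ is a comb $\delta_m^2h(\nu_m s)$, essentially periodic over the whole frequency range $I=[\alpha,\beta]$ of the copies. Split it into its mean $c\,\delta_m^2\chi_I$ plus mean-zero pieces on each period cell. The mean part contributes $\sim\delta_m^2\log|(\xi-\alpha)/(\xi-\beta)|$, and the mean-zero pieces have uniformly bounded Hilbert transform. Hence the argument is $O(\delta_m^2)$ except within distance $\nu_m^{-c}$ of the endpoints of $I$; the logarithms do not pile up uniformly on $[\tfrac12,1]$.

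Lemma~\ref{llog} does not support your claim. It bounds $\sup_j|\arg\frak{O}_j(k)|$, a supremum over \emph{partial} products, with $j$ chosen depending on $k$ so that the comb ends just to the left of $k$. Your earlier variant has the opposite defect. Isolating $\sim\nu$ modulated copies of the full $q_\nu$ costs $L^2$ mass $\sim\nu\delta^2$, because modulation preserves the norm.

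The missing idea is to use the supremum over $T_2$ \emph{inside} a block, in a $\xi$-dependent way. Lay the bumps out in space in increasing order of frequency: $q_j=\nu^{-1}q_{[\delta]}(x/\nu)e^{i\xi_jx}$ placed at $\rho j$, with $\xi_j=j/\nu$, so that the $\xi$-supports of $b(\cdot,q_j)$ are disjoint. A comb $b_\nu$ fed directly into inverse scattering, as in your Step~1, has no such spatial ordering. For a given $\xi\in[\tfrac12,1]$, choose $m=m(\xi)$ with $\xi_m+\tfrac1{2\nu}\le\xi<\xi_m+\tfrac3{2\nu}$, and cut between the $m$-th and $(m+1)$-st bump. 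Iterating Lemma~\ref{kot1} and using Lemma~\ref{lap5} gives $a(\xi,\cdot)\approx\prod_{j\le m}a(\xi,q_j)$. This partial product has argument $\gtrsim\delta^2\log\nu$, because its comb terminates at distance $\sim\nu^{-1}$ to the left of $\xi$; this is exactly \eqref{lotg} in the paper.

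With this repair, your other idea is a legitimate simplification of the paper's second step. Take $T_1$ in the gap before block $n$ and $T_2$ at the $\xi$-dependent cut; this isolates the relevant piece of that block, up to tails controlled by \eqref{stab1}. The paper instead cuts from the left end and must control $a(\xi,q^{(n-1)})$ uniformly via \eqref{pron2}, then subtract two values of $m$.
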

\begin{proof}
We do the proof in two steps. \medskip

\noindent {\bf First step: building a sample.} Unlike in the first section, we use inverse scattering theory as a stepping stone in our construction. Consider a function $b(\xi)\in C_c^\infty(\R)$ with support inside the interval $(0,1)$ which is not identically equal to zero. Take $\delta\in (0,1)$ and consider $b_{[\delta]}(\xi):=\delta b(\xi)$. It will define $a_{[\delta]}(k)$ by \eqref{ar}, $\rc_{[\delta]}(\xi)\in \Sch(\R)$ and $q_{[\delta]}(x)\in \Sch(\R)$. For $q_{[\delta]}$, we have a bound
\begin{equation}\label{lerch4}
|\partial^pq_{[\delta]}(x)|\le_{\ell,p} (x^2+1)^{-\ell/2}, \quad \forall \ell\in \mathbb{N}, \, p\in \Z^+\,.
\end{equation}
By the sum rule, 
\begin{equation}\label{dn1}
2\pi \int_\R |q_{[\delta]}|^2dx= \int_\R \log (1+|b_{[\delta]}|^2)d\xi\sim \delta^2\,.
\end{equation}
Take $\nu  \ge 1$ and let $q_{[\delta]}^{(1/\nu  )}(x):=q_{[\delta]}(x/\nu  )/\nu  $. Then, 
$
b(\xi,q_{[\delta]}^{(1/\nu  )})=b(\nu  \xi,q_{[\delta]})=\delta b(\nu  \xi)\,.
$
For every $j\in \{0,\ldots,\nu  -1\}$, we let $q_j(x):=q_{[\delta]}^{(1/\nu  )}(x)e^{i\xi_j x}$. Notice that 
\begin{equation}\label{fega}
|q_j(x)|\le_{\ell} \nu  ^{-1}((x/\nu  )^2+1)^{-\ell/2}, \quad \forall \ell\in \mathbb{N}\,.
\end{equation}
The modulation property of the scattering transform yields $b(\xi,q_j)=\delta b(\nu  (\xi-\xi_j)), a(\xi,q_j)=a_{[\delta]}(\nu  (\xi-\xi_j)), \xi_j:=j/\nu  $.  Take $\rho>0$, let
\[
Q_{\nu  ,m,\rho,\delta}(x):=\sum_{j=0}^{m} q_j(x-\rho j), \, m\in \{0,\ldots,\nu  -1\}
\]
and choose $\rho=e^{\nu  ^2}$. 
By construction, we have 
\begin{eqnarray}
b(\xi,q_j)b(\xi,q_\ell)=0, \quad j\neq \ell\,,   \label{pool1}\\
\sum_{j=0}^m |b(\xi,q_j)|\lesssim \delta, \quad\sum_{j=0}^m |b(\xi,q_j)|^2\lesssim \delta^2,\,\quad \forall \xi\in \R\,.\label{pool2}
\end{eqnarray}
We can now apply the previous Lemma $\nu  $ times recursively taking each time $v_1=Q_{\nu  ,m,\rho,\delta}$, $v_2=q_{m+1}(x)$, and $R=\rho(m+1)$. To estimate the errors, we notice that $\|Q_{\nu  ,m,\rho,\delta}\|_1\lesssim m\le \nu  $ and $\|q_{m+1}\|_1\lesssim  1$. The strong decay \eqref{fega} of Schwartz-class functions $v_1$ and $v_2$ gives
$
\eps^*\le  e^{-2\nu  ^2}
$
for each iteration and $\nu  \ge \nu  _0$ with large enough $\nu  _0$. Rewrite \eqref{jh1} as
\begin{eqnarray*}
\left(
\begin{array}{cc}
a(\xi,Q_{\nu  ,m+1,\rho,\delta}) & \bar{b}(\xi,Q_{\nu  ,m+1,\rho,\delta})  \\
b(\xi,Q_{\nu  ,m+1,\rho,\delta}) & \bar{a}(Q_{\nu  ,m+1,\rho,\delta})
\end{array}
\right)=\hspace{5cm}\\
\left(
\begin{array}{cc}
a(\xi,q_{m+1}) & \bar{b}(\xi,q_{m+1})e^{-\xi \rho (m+1)/i}\\
b(\xi,q_{m+1})e^{\xi \rho (m+1)/i} & \bar{a}(\xi,q_{m+1})
\end{array}
\right)
\left(
\begin{array}{cc}
a(\xi,Q_{\nu  ,m,\rho,\delta}) & \bar{b}(\xi,Q_{\nu  ,m,\rho,\delta})\\
b(\xi,Q_{\nu  ,m,\rho,\delta}) & \bar{a}(Q_{\nu  ,m,\rho,\delta})
\end{array}
\right)+O(\eps^*)\,.
\end{eqnarray*}
Recall that $\rho=e^{\nu^2}$. Applying Lemma \ref{lap5}, we get
\begin{eqnarray*}
\left(
\begin{array}{cc}
a(\xi,Q_{\nu  ,m,e^{\nu^2},\delta}) & \bar{b}(\xi,Q_{\nu  ,m,e^{\nu^2},\delta})\\
b(\xi,Q_{\nu  ,m,e^{\nu^2},\delta}) & \bar{a}(Q_{\nu  ,m,e^{\nu^2},\delta})
\end{array}
\right)=\hspace{4cm}\\
\prod_{n=0}^{m}\left(
\begin{array}{cc}
a(\xi,q_{n}) & \bar{b}(\xi,q_{n})e^{-\xi n e^{\nu^2}/i}\\
b(\xi,q_{n})e^{\xi n e^{\nu^2}/i} & \bar{a}(\xi,q_{n})
\end{array}
\right)+O(e^{-\nu  ^2})\,.
\end{eqnarray*}
We multiply the matrices in the product using \eqref{pool1} to get
\begin{equation}\label{choi}
a(\xi,Q_{\nu  ,m,e^{\nu^2},\delta})=a(\xi,q_0)\cdot\ldots \cdot a(\xi,q_{m})+O(e^{-\nu  ^2})
\end{equation}
uniformly in $\xi\in \R, \delta\in (0,1)$, and  $\nu  \ge \nu  _0$.
Moreover, 
\begin{eqnarray*}
1\le |a(\xi,q_0)\cdot\ldots \cdot a(\xi,q_{m})|=\exp\left(\tfrac 12\sum_{j=0}^m \log(1+|b(\xi,q_j)|^2) \right)\le\\
\exp\left(\tfrac 12\sum_{j=0}^m |b(\xi,q_j)|^2 \right)\stackrel{\eqref{pool2}}{\le}
\exp(C\delta^2)\le 1+C\delta^2\,.
\end{eqnarray*}
Hence, we get
\begin{eqnarray}\label{glob1}
1\le |a(\xi,Q_{\nu  ,m,e^{\nu^2},\delta})|\le  1+C\delta^2+e^{-\nu  ^2}
\end{eqnarray}
and an identity
 \[
|b(\xi,Q_{\nu  ,m,e^{\nu  ^2},\delta})|^2=|a(\xi,Q_{\nu  ,m,e^{\nu  ^2},\delta})|^2-1
\]
implies
\begin{eqnarray}\label{lor8}
|b(\xi,Q_{\nu  ,m,e^{\nu  ^2},\delta})|\lesssim \delta+e^{-\nu  ^2/2}
\end{eqnarray}
for all $\xi\in \R,\delta\in (0,1)$, and $m\in \{0,\ldots,\nu  -1\}$. Recall that $|a(k,V)-1|\to 0$ when $\Im k\ge 0, |k|\to\infty$ and $V\in \Sch(\R)$. Therefore, the application of the maximum principle to the function \[a(k,Q_{\nu  ,m,e^{\nu  ^2},\delta})-a(k,q_0)\cdot\ldots \cdot a(k,q_{m})\,,\] analytic in $k\in \C^+$, gives 
\[
|a(k,Q_{\nu  ,m,e^{\nu  ^2},\delta})-a(k,q_0)\cdot\ldots \cdot a(k,q_{m})|\stackrel{\eqref{choi}}{\le} Ce^{-\nu  ^2}\,,
\]
when $k\in \C^+$.
By the reasoning used in the proof of Lemma \ref{llog}, we get (now we have that the height of each bump is $\sim \delta^2$ and its width, as before, is $\sim \nu  ^{-1}$):
\begin{equation}\label{gr4}
\max_{\nu  /2\le m\le \nu  -1,\xi\in [\frac 12,1]}|\arg (a(\xi,q_0)\cdot \ldots \cdot a(\xi,q_{m}))|\ge  C_1\delta^2\log\nu  
\end{equation}
and, therefore, 
\begin{equation}\label{lotg}
\max_{\nu  /2\le m\le \nu  -1,\xi\in [\frac 12,1]}
|\arg a(\xi,Q_{\nu  ,m,e^{\nu  ^2},\delta})|\gtrsim \delta^2\log\nu  \,,
\end{equation}
provided that both $\delta^2\log\nu  $ and $\nu  $ are large enough and the bound
\begin{equation}\label{condi9}
C_1\delta^2\log\nu  -Ce^{-\nu  ^2}>C_2\delta^2\log\nu  
\end{equation} holds with some positive constant $C_2$.
We will call $Q_{\nu  ,\nu  -1,e^{\nu  ^2},\delta}$ {\it a sample}. This is not a compactly supported function but it is essentially localized to the interval, e.g., $[-\nu  ^2,\nu   e^{\nu  ^2}]$ in a sense that its size is negligible outside that range when $\nu  \to\infty$. For the $L^2$-norm of the sample, we recall \eqref{dn1} and sparseness of bumps within a sample to get
\begin{equation}\label{bvi}
\|Q_{\nu  ,\nu  -1,e^{\nu  ^2},\delta}\|_2^2\sim \delta^2\,.
\end{equation}

\noindent {\bf Remark.} By taking $\nu\to +\infty$, our argument already shows that the weak-type bound
\begin{equation}\label{we7}
|\{\xi: \mathrm{M}_w(\xi,q)\ge \lambda \}|\lesssim \frac{\|q\|_2}{\lambda}, \, \forall \lambda>0
\end{equation}
fails even when $q$ is restricted to $\|q\|_2\le \delta$ and $\delta$ is an arbitrarily small number.

\smallskip
\noindent {\bf Second step: putting samples together.} In the previous construction of samples, we choose 
\begin{equation}\label{choice}
\nu  _n=\exp(\exp(n^2)),\quad \delta_n=\exp(-n),\quad n\in \mathbb{N}
\end{equation}
where $n\ge n_0$. This choice of parameters will satisfy \eqref{condi9} for $n\ge n_0$, where $n_0$ is sufficiently large, because $\delta_n^2\log\nu_n=e^{-2n+n^2}\to +\infty$ as $n\to\infty$. We also take \[t_n=\exp(\exp(\nu  _n))\] and construct the potential $q$ by spreading the samples over the line at distance $t_n$ from the origin:
\[
q(x):=\sum_{m\ge n_0} Q_m(x), \quad Q_m(x):=Q_{\nu  _m,\nu  _m-1,e^{\nu  _m^2},\delta_m}(x-t_m),\quad q^{(n)}(x):=\sum_{n_0\le m\le n} Q_m(x)\,.
\]
Given that $t_{n+1}/t_n\to\infty$ and $t_n/(\nu  _ne^{\nu  _n^2})\to\infty$ as $n\to\infty$, our $q$ is a very sparse configuration of samples. Moreover, the $n$-th sample is ``damped'' by a small parameter $\delta_n$ to control the $L^2$-norm of the resulting $q$. Indeed, \eqref{bvi} yields
 \[\|q\|_2^2 \lesssim \sum_{n\ge n_0} \exp(-2n)\lesssim e^{-2n_0}.\] 
Notice that  we can make $\|q\|_2$ as small as we want by choosing $n_0$ large enough.\smallskip
 
 Next, we use \eqref{jh1} again to analyze the fundamental matrix after combining the samples in $q$. 
In fact, after adding a new sample $Q_n$ to those already collected in $q^{(n-1)}$, the  resulting recurrence becomes (see Lemma~\ref{kot1})
\begin{equation}\label{jh2}
\left\{
\begin{array}{c}
a(\xi,q^{(n)})=a(\xi,q^{(n-1)})a(\xi,Q_n)+b(\xi,q^{(n-1)})\overline{b(\xi,Q_n)}e^{-\xi t_n/i}+O(\eps^*_n),\\ b(\xi,q^{(n)})=\overline{a(\xi,Q_n)}b(\xi,q^{(n-1)})+{a(\xi,q^{(n-1)})}b(\xi,Q_n)e^{\xi t_n/i}+O(\eps^*_n)
\end{array}
\right.
\end{equation}
uniformly in $\xi\in \R$. To bound $\eps^*_n$, one uses
\begin{equation}\label{topu1}
\|q^{(n)}\|_1\lesssim n\nu  _{n-1}\,,\quad\|Q_n\|_1\lesssim \nu  _n
\end{equation}
and \eqref{fega} to get
\[
\eps^*_n\lesssim \exp(C(n\nu  _{n-1}+\nu  _n))t_n^{-\tfrac 12}\le t_n^{-\tfrac 13}\,.
\]
According to \eqref{glob1} and \eqref{lor8}, we have bounds \begin{equation}\label{loge1}
 1\le |a(\xi,Q_n)|\le 1+Ce^{-n}, |b(\xi,Q_n)|\le Ce^{-n}.\end{equation} Adding equations in \eqref{jh2} and taking absolute values gives
\[
|a(\xi,q^{(n)})|+|b(\xi,q^{(n)})|\le (1+Ce^{-n})(|a(\xi,q^{(n-1)})|+|b(\xi,q^{(n-1)})|)+O(t_n^{-\tfrac 13}), \, n\ge n_0\,.
\]
Our choice $t_n=\exp(\exp(\nu  _n))$ yields an estimate $|a(\xi,q_n)|+|b(\xi,q_n)|\le 1+Ce^{-n_0}$ after we use the  Lemma~\ref{pron1}. The substitution of that bound back into \eqref{jh2} gives 
\[
|a(\xi,q^{(n)})|\le (1+Ce^{-n})|a(\xi,q^{(n-1)})|+Ce^{-n}, \quad
|b(\xi,q^{(n)})|\le (1+Ce^{-n})|b(\xi,q^{(n-1)})|+Ce^{-n}, \, n\ge n_0\,.
\]
Using Lemma \ref{pron1} one more time  gives
\begin{equation}\label{pron2}
1\le |a(\xi,q^{(n)})|\le 1+Ce^{-n_0}, \quad |b(\xi,q^{(n)})|\le Ce^{-n_0}
\end{equation}
for all $n\ge n_0$. \bigskip\bigskip\bigskip

%So,
%$|a(\xi,q_n)-a(\xi,q_{n-1})a(\xi,Q_n)|\lesssim e^{-n}, \, n\ge n_0+1$. That implies
%\[
%|a(\xi,q_n)-\prod_{s=n_0}^{n}a(\xi,Q_s)|\lesssim e^{-n_0}, \, n\ge n_0\,.
%\]

\begin{tikzpicture}
    % Draw a line from (0,0) to (2,2)
    \draw[thick, black] (-0.1,0) -- (14.5,0);
    %\draw[thick, black] (5.5,-0.1) -- (5.5,4);
    % Draw a circle at (3,1) with radius 0.5cm
    \draw[black, fill=black] (1.4,0) circle (0.4mm);
    \node at (1.4,-0.4) {$t_{n-1}$};
    \draw[darkbrown, samples=1000, domain=-1:1, smooth] plot [smooth, solid] coordinates {(0.6,0.1) (1, 0.2) 
    (1.4,0.5) (1.8,0.2) (2.2,0.1)};
   % \draw[black, fill=black] (5,0) circle (0.4mm);
   % \node at (5,-0.4) {$\xi_j$};
    %\draw[black, fill=black] (5.8,0.2) circle (0.4mm);
    \node at (8,-1) {Figure 4. Putting samples together. Showing the absolute values of bumps};
     \node at (8,-1.5) { in each sample and the place of the cut (vertical segment) at $t_n+(m+0.5)e^{\nu  _n^2}$. };

    \foreach \r in {2,...,3}
    {
    \draw[darkbrown, samples=1000, domain=-1:1, smooth] plot [smooth, solid] coordinates {(4*\r+0.6,0.1) (4*\r+1, 0.2) 
    (4*\r+1.4,0.4) (4*\r+1.8,0.2) (4*\r+2.2,0.1)};
    }
 \draw[black, fill=black] (9.4,0) circle (0.4mm);
    \node at (9.6,-0.4) {$t_{n}$};
    \draw[black, fill=black] (13.4,0) circle (0.4mm);
    \node at (13.6,-0.4) {$t_{n}+m\exp({\nu  _n}^2)$};
   \draw[thick, black] (14,-0.2) -- (14,0.2);
    \end{tikzpicture}
\bigskip

Next we choose each $m\in \{0,\ldots, \nu  _n-1\}$ and consider 
potential $q\cdot \chi_{(-\infty,t_n+(m+0.5)e^{\nu  _n^2}]}$. We apply Lemma \ref{kot1} again choosing $v_1=q_{n-1}$, $v_2=Q_{\nu  _n,m,e^{\nu  _n^2},\delta_n}\cdot \chi_{(-\infty,(m+0.5)e^{\nu  _n^2}]}$, and $R=t_n$.
Then, combining \eqref{topu1},\eqref{loge1}, and \eqref{pron2}, we get
\[
|a(\xi,q\cdot \chi_{(-\infty,t_n+(m+0.5)e^{\nu  _n^2}]})-a(\xi,q_{n-1})a(\xi,v_2)|\lesssim e^{-n_0}\,.
\]
Finally, by applying the same Lemma \ref{kot1} one more time, we obtain
\[
|a(\xi,v_2)-a(\xi,Q_{\nu  _n,m,e^{\nu  _n^2},\delta_n})|\lesssim e^{-\nu  _n^2}
\]
and substitution into the previous estimate, along with \eqref{pron2}, gives us
\[
|a(\xi,q\cdot \chi_{(-\infty,t_n+(m+0.5)e^{\nu  _n^2}]})-a(\xi,q_{n-1})a(\xi,Q_{\nu  _n,m,e^{\nu  _n^2},\delta_n})|\lesssim e^{-n_0}\,.
\]
Application of the maximum principle again yields \[
|a(k,q\cdot \chi_{(-\infty,t_n+(m+0.5)e^{\nu  _n^2}]})-a(k,q_{n-1})a(k,Q_{\nu  _n,m,e^{\nu  _n^2},\delta_n})|\lesssim e^{-n_0}\,,
\]
when $k\in \overline{\C^+}$. Therefore, for $n_0$ large enough, one has
\[
\arg a(k,q\cdot \chi_{(-\infty,t_n+(m+0.5)e^{\nu  _n^2}]})=\arg a(k,q_{n-1})+\arg a(k,Q_{\nu  _n,m,e^{\nu  _n^2},\delta_n})+O(e^{-n_0})
\]
where $k\in \overline{\C^+}$.
Subtracting these equations from each other for two different values of $m$: $m=r$ and $m=0$, we obtain
\begin{eqnarray*}
\max_{\lfloor \nu  _n/2\rfloor\le r\le \nu  _n-1,\xi\in [\frac 12,1]} |\arg a(\xi,q\cdot \chi_{(-\infty,t_n+(r+0.5)e^{\nu  _n^2}]})-\arg a(\xi,q\cdot  \chi_{(-\infty,t_n+0.5e^{\nu  _n^2}]}  )|\\\hspace{4cm}\stackrel{\eqref{lotg}}{\gtrsim} \delta_n^2\log\nu  _n\stackrel{\eqref{choice}}{\gtrsim} e^{-2n+n^2}\,.
\end{eqnarray*}
Taking $n_0$ large enough, that bound implies 
\begin{eqnarray}\label{fafe8}\hspace{1cm}
\max_{\lfloor \nu  _n/2\rfloor\le r\le \nu  _n-1,\xi\in [\frac 12,1]} |\arg a(\xi,q\cdot \chi_{[0,t_n+(r+0.5)e^{\nu  _n^2}]})-\arg a(\xi,q\cdot  \chi_{[0,t_n+0.5e^{\nu  _n^2}]}  )| \gtrsim e^{-2n+n^2}\,.
\end{eqnarray}
Arguing by contradiction, we get $\mathrm{M}_w(\xi,q)=+\infty$ for $\xi\in [\frac 12,1]$.  
\end{proof}

\begin{corollary}\label{cm}
The questions \hyperlink{id1}{Q1} and \hyperlink{id2}{Q2}  for the Dirac equation and the Krein system both have negative answers.\end{corollary}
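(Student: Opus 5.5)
We derive the corollary from the preceding theorem (which gives $q\in L^2(\R)$ with $\mathrm{M}_w(\xi,q)=+\infty$ on $[\frac12,1]$) and then transfer the statement to Krein systems through \eqref{trans1}.

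\emph{Dirac equation.} The construction is essentially one-sided. All samples $Q_m$ sit near $t_m>0$ and decay faster than any power, so the mass of $q$ on $(-\infty,0]$ is negligible. In \eqref{fafe8} we already restricted to $[0,T_2]$. Put $q_+:=q\chi_{[0,\infty)}$. For this $q_+$ the truncations $q\chi_{[0,T]}$ have $\arg a(\xi,\cdot)$ oscillating by at least $\gtrsim e^{-2n+n^2}$ between $T=t_n+0.5e^{\nu_n^2}$ and some $T=t_n+(r+0.5)e^{\nu_n^2}$, for a set of $\xi\in[\frac12,1]$ that we must make large. This gives unboundedness of $\mathrm{M}_w$, hence of $\mathrm{M}$, and answers Q2 negatively.

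To answer Q1 negatively we must also show that $a(\xi,q\chi_{[0,T]})$ has no limit as $T\to\infty$. Since $|a|\le 1+Ce^{-n_0}$ by \eqref{pron2} and $|a|\ge1$, it suffices that the argument is not Cauchy. Here an important point must be fixed. The construction produces the maximum over $\xi$, not a lower bound for each $\xi$. To get a set of positive measure, we revisit Lemma~\ref{llog}. There, \eqref{hyu} gives the lower bound $C_1\delta^2|\log(\frac1\nu+\Im k)|$ for \emph{every} $k$ with $\Re k\in[\frac12,1]$, choosing $j$ with $\xi_j$ just to the left of $\Re k$. So for every $\xi\in[\frac12,1]$ there is an $m=m(\xi)$ with $|\arg(a(\xi,q_0)\cdots a(\xi,q_m))|\gtrsim \delta^2\log\nu$. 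The argument needs some care on the boundary because $\log|a|$ is smooth. Alternatively, one can take the point $k=\xi+i/\nu$ in the Stolz angle; this suffices for $\mathrm{M}$, and for $\mathrm{M}_w$ one uses the boundary version.

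\emph{Convergence for a.e.\ $\xi$.} Since $\delta_n^2\log\nu_n\to\infty$, for every $\xi\in[\frac12,1]$ the oscillation of $\arg a(\xi,q\chi_{[0,T]})$ over $T\in[t_n,t_n+\nu_ne^{\nu_n^2}]$ tends to infinity. A continuous branch of the argument is therefore unbounded in $T$. Then $a(\xi,q\chi_{[0,T]})$ cannot converge. Otherwise, because $|a|\ge1$ and the limit is nonzero, the argument would eventually stay in a small arc, contradicting windings of size $\to\infty$. One subtlety remains. The argument is measured through the branch $\log a(\infty)=0$ defined by the outer-function formula, not through a continuous-in-$T$ branch. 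However, the displayed identity $\arg a(k,\cdot)=\arg a(k,q_{n-1})+\arg a(k,Q_{\dots})+O(e^{-n_0})$ holds with these canonical branches. Between consecutive cut points $m$ and $m+1$ the value changes only by a bounded amount, since $|\arg a(\xi,q_{m+1})|\lesssim\delta^2$. So jumps modulo $2\pi$ cannot fake the growth, and a genuine limit of $a$ would force the canonical arguments at large $T$ to lie near a single value plus multiples of $2\pi$ with bounded increments, which is a contradiction.

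\emph{Krein systems.} For $q$ supported in $[0,\infty)$, define $A_+(x)=-\frac12\overline{q(x/2)}$ and $A_-=0$. Then $\frak a(k,A_-)=1$ and $\frak b(k,A_-)=0$, so \eqref{trans1} gives $a(2k,q\chi_{[0,T]})=\frak a(k,A_+\chi_{[0,2T]})=\frak A(2T,k,A_+)$. Hence $\frak M_w(\xi/2,A_+)=\sup_T|\arg a(\xi,q\chi_{[0,T]})|=\infty$ on $[\frac14,\frac12]$, and $\frak A(x,\xi,A_+)$ has no limit there. This negates Q1 and both parts of Q2.

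The main obstacle is the pointwise-in-$\xi$ version on the real line, as opposed to the max over $\xi$ (Stolz angle points are easy). I would handle it by redoing the estimate for \eqref{hyu} with $\Im k=0$ and $\xi\in(\xi_j+\frac1{2\nu},\xi_{j+1})$. The tail terms stay bounded for the same reasons, and the main term becomes $\sum_p\int\log|a(s'-p)|/(x-s')\,ds'\gtrsim\delta^2\log\nu$.
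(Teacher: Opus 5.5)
Your proposal is correct and follows the paper's overall plan. You take the $q$ of the preceding theorem on $[0,\infty)$ and transfer via \eqref{trans1} with $A_-=0$, so that $a(2k,q\chi_{[0,T]})=\mathfrak{A}(2T,k,A_+)$. You then derive non-convergence from the unbounded argument. The difference from the paper lies in how you rule out $2\pi$-jumps in the Q1 step.

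\textbf{How the paper handles Q1.} The paper argues that $x\mapsto\arg\mathfrak{A}(x,\xi,A_+)$ is continuous, because the image curve $\Gamma_x$ of the vertical ray depends continuously on $x$ and stays outside the unit disc. Once $\mathfrak{A}(x,\xi,A_+)$ is within $\epsilon$ of a limit $\alpha$, the canonical argument is therefore trapped in a single window. This is a general principle: for any potential, convergence of $\mathfrak{A}(x,\xi,A)$ forces $\mathfrak{M}_w(\xi,A)<\infty$. It tacitly uses joint continuity of $\mathfrak{A}$ and the approach $\mathfrak{A}\to1$ as $\eta\to\infty$, uniformly for $x$ in compacts.

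\textbf{How you handle it.} You work only at the cut points $T_{n,m}=t_n+(m+\frac12)e^{\nu_n^2}$. You use the additive decomposition of canonical arguments from the construction to show that consecutive increments are $\lesssim\delta_n^2+e^{-n_0}$. This needs no continuity in $x$ at all, but it is tied to this specific potential.

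\textbf{Wording fix.} What the argument needs is that these increments are smaller than $2\pi-2\epsilon$, not merely ``bounded''. This does hold, since $\delta_n^2+e^{-n_0}\ll1$, so say ``small''.

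\textbf{Pointwise lower bound.} Your remark that the lower bound must hold for each $\xi$, not only as a maximum over $\xi$, is well taken. The paper's contradiction with \eqref{madv} also needs $\sup_x|\arg\mathfrak{A}(x,\xi,A_+)|=\infty$ at the particular $\xi$, which is what the theorem asserts. However, \eqref{hyu} as stated already covers $\Im k=0$, with the near-field principal value controlled by the Lipschitz bound. So no new estimate is needed, only the choice of $j$.

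\textbf{Choice of $j$.} Your range $\xi\in(\xi_j+\frac1{2\nu},\xi_{j+1})$ covers only half of each cell. For the other half, use $j-1$, so that $\xi-\xi_{j-1}\in[\frac1\nu,\frac3{2\nu}]$ and $|\log|\xi-\xi_{j-1}||\ge\log(\nu/2)$ still holds.
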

\begin{proof} It is enough to focus on the Krein systems.
Take $q$ from the previous proof and let
 $A_+(x)=-\tfrac 12\overline{q(\tfrac x2)},\, x\ge 0$. Notice that $\|A_+\|_2<\infty$ and, recalling the formulas \eqref{trans1} and \eqref{fafe8}, one has
\begin{equation}\label{madv}
\sup_{x>0,\xi\in [1/4,1/2]}|\arg \frak{A}(x,\xi,A_+)|=+\infty\,,
\end{equation}
which provides the negative answer to the first part of \hyperlink{id2}{Q2} for Krein systems for both maximal functions.\medskip

The answer to \hyperlink{id1}{Q1} is also negative.   Take $A=A_+$ again.  We argue by contradiction. Indeed, suppose there is $\xi \in [\tfrac 14,\tfrac 12]$ and $\lim_{x\to\infty}\frak{A}(x,\xi,A_+)=:\alpha$. Consider the curve $\Gamma_x$ which is the image of the ray $\{k=\xi+i\eta,\eta\in [0,+\infty)\}$ under the map $k\mapsto \frak{A}(x,k,A_+)$. It connects the point $1$ to the point $\frak{A}(x,\xi,A_+)$ and is outside the disc $\{|z|<1\}$. Clearly, the total variation of the angle (argument) as we follow that curve from $\eta=+\infty$ down to $\eta=0$ is equal to $\arg \frak{A}(x,\xi,A_+)$. That function is continuous in $x$ just like the curve $\Gamma_x$ itself  is continuous in $x$. Take $\epsilon=0.01$ and $x_\epsilon>0$ such that $|\alpha-\frak{A}(x,\xi,A_+)|<\epsilon$ for all $x\ge x_\epsilon$. $\Gamma_x$ is located outside the disc $\{|z|<1\}$ and it is continuous in $x$, which  implies that $|\arg\frak{A}(x,\xi,A_+)-\arg\frak{A}(x_\epsilon,\xi,A_+)|<2\epsilon$ for all $x\ge x_\epsilon$ and that is in contradiction with \eqref{madv}.
\end{proof}

\noindent{\bf Remark.} A simple modification of our construction shows that there is $A\in L^2(\R)$ in the Krein system, for which $\lim_{x\to \infty}\frak{A}(x,\xi,A)$ diverges for all $\xi\in \R$. Indeed, when choosing the $n$-th sample $Q_n$, we can accommodate the growth of $\arg \frak{A}(x,\xi,A)$ for $\xi\in I_n$ where the intervals $\{I_n\}|_{n=1}^\infty$ satisfy $I_n\subset I_{n+1}$ and $\{I_n\}\uparrow \R$. Alternatively, one can chose 
those intervals $\{I_n\}|_{n=1}^\infty$ such that $|I_n|=1$ and the set $\{n: \xi\in I_n\}$ is infinite for every $\xi\in \R$.

 \smallskip

\noindent{\bf Remark.} Let $\epsilon>0$ be an arbitrarily small parameter and   $\nu\in \R^+$ and $ N\in \mathbb{N}$ be two large parameters. Let  $J:=\{j_1<j_2<\ldots<j_N\}$ where $j_n \in \mathbb{Z}, n\in \{1,\ldots,N\}$. Consider the set  $S:=\cup_{n} [j_n/\nu,(j_n+1)/\nu]$ and $S_{\rm int}:=\cup_{n} [(j_n+0.25)/\nu,(j_n+0.75)/\nu]$. The formula \eqref{jh1} and our construction of a sample $Q$ can be used to explicitly build a potential $Q\in \Sch(\R)$ in Dirac equation for which the coefficient $b$ satisfies: 
\[\left\{
\begin{array}{cc}
|b(\xi,Q)|\sim 1, & \xi\in S_{\rm int};\\
|b(\xi,Q)|\le \epsilon, &\xi\in S^c;\\
|b(\xi,Q)|\lesssim 1, & \xi\in \R\,.
\end{array}
\right.
\]
Thus, we can effectively make $Q$ reflectionless on a set $S^c$ of an arbitrary structure while keeping $|r(\xi,Q)|=|b(\xi,Q)/a(\xi,Q)|\sim 1$ on a large subset of $S$.

%\noindent{\bf Remark.}  Consider the Shr\"odinger operator 
%\[
%Hf=-\partial^2_{xx}f+vf
%\]
%on the positive half-line with real-valued potential $v$ and some boundary condition at zero that makes it self-adjoint. The spectrum and generalized eigenfunctions of $H$ were studied for slowly-decaying $v$ in, e.g.,  \cite{ck,kls,Remling}. In particular, for $v\in L^p(\R^+), p\in [1,2)$, the WKB asymptotics of Jost solution was established for a.e. value of $k\in \R$. The modification of our construction for Dirac problem shows that there are $v\in L^2(\R^+)$ for which such Jost asymptotics fails for all $k$ on some interval. 
\bigskip
\section{Appendix}

In this Appendix, we collect some auxiliary statements used in the main text.
\begin{lemma}\label{appro1} Suppose we have square matrices $\{X_1,\ldots,X_\nu  \}$ and $\{\Delta_1,\ldots,\Delta_\nu  \}$. Denote $\Psi_j=X_j\cdot\ldots\cdot X_1$. Assume that $\|X_j\|\le C$ and $\|\Delta_j\|\le\epsilon$ for each $j\in \{1,\ldots,\nu  \}$. Then, we have
\begin{equation}\label{lo9}
\|(X_n+\Delta_n)\cdot\ldots\cdot(X_1+\Delta_1)-\Psi_n\|\le C_1^n\epsilon, \quad n\in \{1,\ldots,\nu  \}\,,
\end{equation}
where $C_1=1+2C+2\epsilon$.
\end{lemma}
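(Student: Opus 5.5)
The plan is to argue by induction on $n$, telescoping the difference of the two products. Set $\Phi_n:=(X_n+\Delta_n)\cdots(X_1+\Delta_1)$ and $E_n:=\Phi_n-\Psi_n$. The recursion is
\[
\Phi_n=(X_n+\Delta_n)\Phi_{n-1}, \qquad \Psi_n=X_n\Psi_{n-1},
\]
which gives
\[
E_n=X_nE_{n-1}+\Delta_n\Phi_{n-1}=(X_n+\Delta_n)E_{n-1}+\Delta_n\Psi_{n-1}.
\]
Taking norms,
\[
\|E_n\|\le (C+\epsilon)\|E_{n-1}\|+\epsilon\|\Psi_{n-1}\|,\qquad \|E_1\|=\|\Delta_1\|\le\epsilon .
\]

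The only remaining ingredient is a bound on $\|\Psi_{n-1}\|$. The trivial bound is $\|\Psi_{n-1}\|\le C^{n-1}$, and here $C$ may be less than $1$. I will use the cruder bound $\|\Psi_{n-1}\|\le \max(1,C)^{n-1}\le (1+C)^{n-1}$. Then I claim inductively that $\|E_n\|\le C_1^{n-1}\epsilon$ with $C_1=1+2C+2\epsilon$; this is even stronger than the stated $C_1^n\epsilon$, because $C_1\ge1$. The base case $n=1$ is immediate. For the inductive step,
\[
\|E_n\|\le (C+\epsilon)C_1^{n-2}\epsilon+\epsilon(1+C)^{n-1}\le C_1^{n-2}\epsilon\,\big[(C+\epsilon)+(1+C)\big]\le C_1^{n-1}\epsilon,
\]
using $(1+C)^{n-1}\le C_1^{n-2}(1+C)$, since $1+C\le C_1$, and $(C+\epsilon)+(1+C)=1+2C+\epsilon\le C_1$. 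This closes the induction.

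An alternative route is to expand the product $\Phi_n$ into $2^n$ words and subtract the single word $\Psi_n$. Each remaining word contains at least one factor $\Delta_j$, so it has norm at most $\epsilon\max(1,C,\epsilon)^{n-1}$. Summing over the words gives at most $\epsilon(\ldots)$-type bounds of the form $(C+\epsilon)^n-C^n\le n\epsilon(C+\epsilon)^{n-1}$, which is again dominated by $C_1^n\epsilon$. Either route works.

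I expect no real obstacle. The only point requiring care is that $\epsilon$ and $C$ are arbitrary positive numbers, so neither $C\ge1$ nor $\epsilon\le1$ can be assumed. Writing the constant as $1+2C+2\epsilon\ge1$ absorbs both the factor $\max(1,C)$ and the $\epsilon$ perturbation, and that choice is precisely what makes the one-line induction above close.
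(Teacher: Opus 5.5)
Your argument is correct and is essentially the paper's induction: the paper splits $E_n=X_nE_{n-1}+\Delta_n\Phi_{n-1}$ and uses $\|\Phi_{n-1}\|\le (C+\epsilon)^{n-1}\le C_1^{n-1}$, whereas you use the mirror split $E_n=(X_n+\Delta_n)E_{n-1}+\Delta_n\Psi_{n-1}$, which even gives the slightly sharper bound $\|E_n\|\le C_1^{n-1}\epsilon$. In your word-expansion aside, the uniform per-word bound $\epsilon\max(1,C,\epsilon)^{n-1}$ summed over $2^n-1$ words would not suffice when $C$ and $\epsilon$ are small; what works is bounding a word with $k$ factors $\Delta_j$ by $\epsilon^kC^{n-k}$, which gives $(C+\epsilon)^n-C^n\le n\epsilon(C+\epsilon)^{n-1}\le (2C+2\epsilon)^{n-1}\epsilon\le C_1^n\epsilon$, using $n\le 2^{n-1}$.
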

\begin{proof}The proof is by induction in $n$. For $n=1$, the bound is correct. Take $n\ge 2$ and suppose we have our inequality for $n-1$, i.e., $\|\Sigma_{n-1}\|\le C_1^{n-1}\epsilon$, where  $\Sigma_{n-1}:=(X_{n-1}+\Delta_{n-1})\cdot\ldots\cdot(X_1+\Delta_1)-\Psi_{n-1}$. Then, 
\[
\Sigma_n=X_n\Sigma_{n-1}+\Delta_n(X_{n-1}+\Delta_{n-1})\cdot\ldots\cdot(X_1+\Delta_1)
\] 
and
$
\|\Sigma_n\|\le CC_1^{n-1}\epsilon+(C+\epsilon)^{n-1}\epsilon\le C_1^n\epsilon
$,
provided that $C_1= 1+2C+2\epsilon$.
\end{proof}\medskip

\begin{lemma}\label{lap5}
 Suppose we have square matrices $\{X_1,\ldots,X_\nu  \}$ and $\{\Delta_1,\ldots,\Delta_{\nu  -1}\}$. Denote $\Psi_j=X_j\cdot\ldots\cdot X_1, \, j\in \{1,\ldots,\nu  \}$. Assume that $\|X_j\|\le C, 1\le C$, and $\|\Delta_j\|\le\epsilon$ for each $j$. Let matrices $\{Y_1,\ldots,Y_\nu  \}$ be defined inductively by:
 \[
 Y_{n+1}=X_{n+1}Y_n+\Delta_n, \quad Y_1=X_1,\quad  n\in \{1,\ldots,\nu  -1\}\,.
 \]
 Then, we have
\begin{equation}\label{lo91}
\|Y_n-\Psi_n\|\le \epsilon (n-1)C^{n-1}, \quad n\in \{1,\ldots,\nu  \}\,.
\end{equation}
\end{lemma}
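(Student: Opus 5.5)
The plan is induction on $n$, following the proof of Lemma~\ref{appro1}. Set $\Sigma_n:=Y_n-\Psi_n$. For $n=1$ we have $Y_1=X_1=\Psi_1$, so $\Sigma_1=0$ and \eqref{lo91} holds trivially.

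For the induction step, I would use the recursion together with $\Psi_{n+1}=X_{n+1}\Psi_n$:
\[
\Sigma_{n+1}=X_{n+1}Y_n+\Delta_n-X_{n+1}\Psi_n=X_{n+1}\Sigma_n+\Delta_n .
\]
Taking norms and using the inductive hypothesis $\|\Sigma_n\|\le \epsilon(n-1)C^{n-1}$ gives
\[
\|\Sigma_{n+1}\|\le C\cdot\epsilon(n-1)C^{n-1}+\epsilon=\epsilon\bigl((n-1)C^{n}+1\bigr).
\]
Since $C\ge1$, we have $1\le C^n$, so the right-hand side is at most $\epsilon\, n\, C^{n}$. This is exactly \eqref{lo91} with $n$ replaced by $n+1$, which closes the induction for $n+1\le\nu$.

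There is no real obstacle here. The only point to watch is the hypothesis $1\le C$, which is used to absorb the additive error $\epsilon$ into $\epsilon C^n$. Unlike in Lemma~\ref{appro1}, the errors here are added rather than multiplied, so no growing constant $C_1$ is needed.
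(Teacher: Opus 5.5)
Your proof is correct and is essentially the paper's own argument: induction on $n$ via the identity $Y_{n+1}-\Psi_{n+1}=X_{n+1}(Y_n-\Psi_n)+\Delta_n$, with the hypothesis $C\ge 1$ used to absorb the additive $\epsilon$ into $\epsilon C^{n}$.
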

\begin{proof}We use induction, again. For $n=1$, the statement holds. Suppose it holds for $n\ge 1$. Denote $E_n=Y_n-\Psi_n$. Then, 
$
E_{n+1}+\Psi_{n+1}=X_{n+1}(E_n+\Psi_n)+\Delta_n
$
and
\[
E_{n+1}=X_{n+1}E_n+\Delta_n, \, \|E_{n+1}\|\le C\|E_n\|+\epsilon\le (n-1)C^n\epsilon+\epsilon\le \epsilon nC^n\,,
\]
as required.
\end{proof}\medskip

\begin{lemma}\label{pron1}
Suppose the numerical sequence $\{x_n\}$ satisfies
\[
|x_{n+1}|\le (1+Ce^{-n})|x_n|+Ce^{-n}, \quad n\ge n_0\,,
\]
then (with a different choice of $C$) we have
\[
|x_n|\le (1+Ce^{-n_0})|x_{n_0}|+Ce^{-n_0},\,\quad n\ge n_0\,.
\]
\end{lemma}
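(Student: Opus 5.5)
The plan is a discrete Gronwall argument: pass to logarithms, so the multiplicative factors become a summable series, and then absorb the additive error. Put $y_n:=|x_n|$ and $\alpha_n:=Ce^{-n}$, so the hypothesis reads $y_{n+1}\le (1+\alpha_n)y_n+\alpha_n$ for $n\ge n_0$.

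First consider the shifted quantity $z_n:=y_n+1$. The hypothesis gives
\[
z_{n+1}\le (1+\alpha_n)y_n+\alpha_n+1=(1+\alpha_n)(y_n+1)=(1+\alpha_n)z_n ,
\]
so the recursion becomes purely multiplicative. Iterating from $n_0$ we get
\[
z_n\le z_{n_0}\prod_{k=n_0}^{n-1}(1+\alpha_k)\le z_{n_0}\exp\Bigl(\sum_{k\ge n_0}Ce^{-k}\Bigr)=z_{n_0}\exp\bigl(C'e^{-n_0}\bigr),
\]
where $C'=C e/(e-1)$ is the tail sum of the geometric series. The only point to watch is that this exponential is uniformly controlled: since $e^{-n_0}\le 1$, convexity gives $\exp(C'e^{-n_0})\le 1+C''e^{-n_0}$ with $C''=e^{C'}C'$, a constant independent of $n_0$.

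Returning to $y_n=z_n-1$ we obtain
\[
y_n\le (1+C''e^{-n_0})(y_{n_0}+1)-1=(1+C''e^{-n_0})y_{n_0}+C''e^{-n_0},
\]
which is the claimed bound with the new constant $C''$. Nothing here is a real obstacle; the only care needed is to keep the constant independent of $n_0$ and $n$, which the geometric tail ensures. The case $n=n_0$ holds trivially.
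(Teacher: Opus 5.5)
Your proof is correct. It reaches the same bound as the paper by a slightly different mechanism. The paper uses an integrating factor: it divides $|x_n|$ by the running product $\psi_n=\prod(1+Ce^{-j})$, which turns the recursion into the purely additive one $z_{n+1}\le z_n+Ce^{-n}$. It then sums the geometric tail and multiplies back by $\psi_n\le 1+Ce^{-n_0}$. You do the dual thing: adding $1$ absorbs the additive error into the multiplicative factor, so you only have to bound a single product.

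Your shift relies on the additive and multiplicative coefficients being the same sequence $Ce^{-n}$; in general you would first replace both by their maximum. The integrating factor instead handles $y_{n+1}\le(1+\alpha_n)y_n+\beta_n$ with unrelated summable $\alpha_n,\beta_n$ without change.

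In exchange, your version is shorter and needs no index bookkeeping. For $z_{n+1}\le z_n+Ce^{-n}$ to hold exactly, the paper's product should run over $n_0\le j\le n-1$ rather than up to $n$. You also make explicit the change of constant $\exp(C'e^{-n_0})\le 1+C''e^{-n_0}$, which uses $n_0\ge 0$; the paper's estimate $\psi_n\le 1+Ce^{-n_0}$ uses this tacitly.
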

\begin{proof}
Let $\psi_n:=\prod_{j=n_0}^{n} (1+Ce^{-j})$ and $z_n:=\psi_n^{-1}|x_n|$. We have $1\le \psi_n\le 1+Ce^{-n_0}$. Then,
\[
z_{n+1}\le z_n+Ce^{-n}, \quad z_{n_0}\le (1+Ce^{-n_0})|x_{n_0}|\,.
\]
The summation in $n$ yields $z_n\le (1+Ce^{-n_0})|x_{n_0}|+Ce^{-n_0}$. Multiplication with $\psi_n$ gives the required estimate.
\end{proof}

\medskip
The Jost solutions are known to exist for $q\in L^1(\R)$. In the next lemma, we quantify their asymptotics.

\begin{lemma}
Consider $T_+$ that solves \eqref{eq1lol} and assume that $q\in L^1(\R)$. Then, 
\begin{equation}\label{accum}
\|T_+(x,\xi,q)-E(x,\xi)\|\lesssim  \left(\int_x^\infty |q(s)|ds\right)\exp\left(C\int_x^\infty |q(s)|ds\right)\,.
\end{equation}
Suppose $v\in L^1(\R)$. Then, 
\begin{eqnarray}\label{stab1}
\left\{\begin{array}{c}
|a(\xi,q+v)-a(\xi,q)|\lesssim \|v\|_1\exp\left(C(\|q\|_1+\|v\|_1)\right),\\ |b(\xi,q+v)-b(\xi,q)|\lesssim \|v\|_1\exp\left(C(\|q\|_1+\|v\|_1)\right)\,.\end{array}\right.
\end{eqnarray}
\end{lemma}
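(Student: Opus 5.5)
We treat $T_+$ through the Volterra integral equation obtained from \eqref{eq1lol} by variation of constants around the free solution $E$. Write $\mathcal{L}_q=i\sigma_3\partial_x+V(x)$ with $V:=i(q\sigma_--\ov{q}\sigma_+)$. Since $\sigma_3^2=\idm$, the equation $\mathcal{L}_qT=\tfrac{\xi}{2}T$ becomes $\partial_xT=\bigl(\tfrac{\xi}{2i}\sigma_3+i\sigma_3V\bigr)T$. We set $W(x):=E^{-1}(x,\xi)T_+(x,\xi,q)$, which solves $\partial_xW=E^{-1}(i\sigma_3V)E\,W$ with $W(+\infty)=\idm$. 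Hence
\[
W(x)=\idm-\int_x^\infty E^{-1}(s,\xi)\,i\sigma_3V(s)\,E(s,\xi)\,W(s)\,ds .
\]
For $\xi\in\R$ the matrix $E$ is unitary, so the kernel has norm $\lesssim|q(s)|$.

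The existence and uniqueness of a bounded solution follows by Picard iteration. The $n$-th iterate is bounded by $\tfrac{1}{n!}\bigl(C\int_x^\infty|q|\bigr)^n$, so the series converges, and Gr\"onwall gives $\|W(x)\|\le\exp(C\int_x^\infty|q|)$. Substituting this into the integral equation gives
\[
\|W(x)-\idm\|\le C\int_x^\infty|q(s)|\,\|W(s)\|\,ds\lesssim\Bigl(\int_x^\infty|q|\Bigr)\exp\Bigl(C\int_x^\infty|q|\Bigr).
\]
Multiplying by the unitary $E$ yields \eqref{accum}. We also record $\|T_+(x)\|\le\exp(C\int_x^\infty|q|)$, and the analogous bound for $T_-$ with $\int_{-\infty}^x|q|$ obtained by the same argument from $-\infty$.

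For \eqref{stab1} we use $T(\xi,q)=T_+^{-1}(0,\xi,q)T_-(0,\xi,q)$. Because $T_\pm$ have unit determinant (the trace of the coefficient matrix vanishes), $\|T_+^{-1}(0)\|=\|T_+(0)\|$ for $2\times2$ matrices. It therefore suffices to show that
\[
\|T_\pm(0,\xi,q+v)-T_\pm(0,\xi,q)\|\lesssim\|v\|_1\exp\bigl(C(\|q\|_1+\|v\|_1)\bigr).
\]
Let $W_1$ and $W_2$ be the solutions of the integral equation for $q+v$ and for $q$. Their difference $D:=W_1-W_2$ satisfies
\[
D(x)=-\int_x^\infty K_{q+v}(s)D(s)\,ds-\int_x^\infty K_v(s)W_2(s)\,ds,
\]
where $K$ is the kernel above and $K_{q+v}-K_q=K_v$ by linearity in the potential. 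The inhomogeneous term is bounded by $C\|v\|_1e^{C\|q\|_1}$. Gr\"onwall then gives $\|D(x)\|\le C\|v\|_1e^{C\|q\|_1}e^{C(\|q\|_1+\|v\|_1)}$. The same argument applies to $T_-$.

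Finally we write
\[
T(q+v)-T(q)=\bigl(T_+^{-1}(q+v)-T_+^{-1}(q)\bigr)T_-(q+v)+T_+^{-1}(q)\bigl(T_-(q+v)-T_-(q)\bigr).
\]
The difference of inverses is controlled through $A^{-1}-B^{-1}=A^{-1}(B-A)B^{-1}$ together with the exponential bounds on the norms. Reading off the entries $a$ and $b$ of $T$ gives \eqref{stab1}.

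The main technical point is the bookkeeping of exponentials, since all constants have to collapse into the single form $\exp(C(\|q\|_1+\|v\|_1))$. Beyond that, we must justify that $T_\pm$ are well defined for $q\in L^1$ only, which the absolutely convergent Volterra series provides. The restriction to real $\xi$ is essential because it is what makes $E$ unitary.
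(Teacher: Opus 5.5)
Your proposal is correct. The proof of \eqref{accum} is the paper's own argument: the same normalized solution $E^{-1}T_+$, the same Volterra equation, and the same Gr\"onwall bound substituted back into the equation.

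For \eqref{stab1} you take a different and somewhat longer route.

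The paper never estimates $T_-$. From \eqref{fix1} it notes that $Y=E^{-1}T_+$ satisfies $\lim_{x\to-\infty}Y(x,\xi,q)=T^{-1}(\xi,q)$, whose entries are $\bar a,-\bar b,-b,a$ (formula \eqref{cod7}). It then runs one Gr\"onwall estimate for $Y(\cdot,\xi,q+v)-Y(\cdot,\xi,q)$, uniform in $x\in\R$, and lets $x\to-\infty$. This gives both bounds at once. The only input from $T_-$ is qualitative, namely its asymptotics at $-\infty$, used to identify the limit.

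You work at the finite point $x=0$ through $T=T_+^{-1}(0)T_-(0)$. This requires:
\begin{itemize}
\item the Volterra/Gr\"onwall argument for both Jost solutions;
\item the observation that $\det T_\pm=1$ because the coefficient matrix is traceless, so $\|T_+^{-1}\|=\|T_+\|$ for $2\times 2$ matrices;
\item the algebraic identities for differences of products and of inverses.
\end{itemize}
All of these steps are right, and the exponentials do collapse into $\exp(C(\|q\|_1+\|v\|_1))$.

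The paper's version is more economical: one object, one estimate, one limit. Yours is equally valid and slightly more symmetric, since it also yields the stability of $T_-(0)$. It avoids passing to the limit at $-\infty$, though that step is trivial in the paper anyway, because $Y$ is bounded and $\partial_x Y$ is integrable.
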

\begin{proof}
Let $Q:=-i(q\sigma_--\bar{q}\sigma_+)$, $Y:=E^{-1}T_+$ and write
$
i\sigma_3\partial_xT_+=QT_++\frac{\xi}{2}T_+
$
in the form
$
\partial_x Y=\widetilde QY,$ $Y(+\infty,\xi,q)=\idm
$, where $\widetilde Q:=-iE^{-1}\sigma_3QE$. Since $\|\widetilde Q\|\lesssim  |q|$, the analysis of the Volterra integral equation
\begin{equation}\label{lop2}
Y(x,\xi,q)=\idm-\int_x^\infty \widetilde Q(s)Y(s,\xi,q)ds
\end{equation}
gives 
\[
\|Y(x,\xi,q)\|\le 1+C\int_x^\infty |q(s)|\|Y(s,\xi,q)\|ds
\]
after taking the norms on both sides.
Iteration of this bound provides the standard exponential estimate
\begin{equation}\label{sto}
\|Y(x,\xi,q)\|\le \exp\left(C\int_x^\infty |q(s)|ds\right)\,.
\end{equation}
Rewriting \eqref{lop2} and substituting the previous bound, we obtain
\[
\|Y(x,\xi,q)-\idm\|\lesssim  \int_x^\infty |q(s)|\exp\left(C\int_s^\infty |q(t)|dt\right)ds\le\left(\int_x^\infty |q(s)|ds\right)\exp\left(C\int_x^\infty |q(s)|ds\right)\,.
\]
Now, we write
\[
\|T_+-E\|=\|E(Y-\idm)\|=\|(Y-\idm)\|\lesssim \left(\int_x^\infty |q(s)|ds\right)\exp\left(C\int_x^\infty |q(s)|ds\right)\,,
\]
as required. 
Next, we turn to proving \eqref{stab1}. 
From \eqref{fix1} and  \eqref{eq4}, we get 
\begin{equation}\label{cod7}
T^{-1}(\xi,q)=\left(
\begin{array}{cc}
\bar a(\xi,q)&-\bar b(\xi,q)\\
-b(\xi,q)& a(\xi,q)
\end{array}
\right)=\lim_{x\to -\infty} Y(x,\xi,q)\,, \quad \xi\in \R\,,
\end{equation}
where $Y$ was introduced above. For the potential $q+v$, we get
\begin{equation}\label{ein}
\partial_x Y(x,\xi,q+v)=(\widetilde Q+\widetilde V)Y(x,\xi,q+v), \quad Y(+\infty,\xi,q+v)=\idm\,,
\end{equation}
where $\widetilde V:=-iE^{-1}\sigma_3VE$ and $V:=-i(v\sigma_--\bar{v}\sigma_+)$. 
Recall that 
\begin{equation}\label{tswai}
\partial_x Y(x,\xi,q)=\widetilde Q(x)Y(x,\xi,q), \quad Y(+\infty,\xi,q)=\idm\,.
\end{equation}
Subtract one equation from another and write
\begin{equation}\label{sable}
\partial_x U=\widetilde QU+W, \quad U(+\infty)=0\,,
\end{equation}
where $U:=Y(x,\xi,q+v)-Y(x,\xi,q)$ and $W:=\widetilde VY(x,\xi,q+v)$. From \eqref{sto}, we conclude 
\[
\|W(x,\xi)\|\lesssim |v(x)|\exp\bigl(C(\|q\|_1+\|v\|_1)\bigr)\,.
\]
Write \eqref{sable} as an integral equation 
\[
U(x)=-\int_x^\infty \widetilde Q(s)U(s) ds-\int_x^\infty W(s)ds
\]
and take the norms of both sides:
\[
\|U(x)\|\lesssim  \int_x^\infty |q(s)|\|U(s)\|ds+\int_x^\infty \|W(s)\|ds\lesssim  \int_x^\infty |q(s)|\|U(s)\|ds+\|v\|_1(\exp\bigl(C(\|q\|_1+\|v\|_1)\bigr)\,.
\]
Its iteration yields an inequality
\[
\|Y(x,\xi,q+v)-Y(x,\xi,q)\|=
\|U(x)\|\lesssim \|v\|_1(\exp\bigl(C(\|q\|_1+\|v\|_1)\bigr)\,.
\]
Taking the limit as $x\to -\infty$ in the last bound and using \eqref{cod7} gives \eqref{stab1}.
\end{proof}
The next result shows the stability of the fundamental matrix under the  $L^1(\R^+)$ perturbations.\medskip
\begin{lemma}\label{ell1} Suppose $X$ solves \eqref{ks1} where $A\in L^1(\R^+)$. Then, we have
\[
\sup_{x\ge 0, \, k\in \overline{\C^+}}\|X(x,k,A)\|\le  \exp(C\|A\|_1)\,.
\]
\begin{proof}
We can write the Duhamel formula for $X$:
\[
X(x,k,A)= \left(
\begin{array}{cc}
e^{ik x} &0\\
0&1\end{array} \right)-\int_0^x \left(
\begin{array}{cc}
e^{ik (x-\tau)} &0\\
0&1\end{array} \right)\left(
\begin{array}{cc}
0 &\overline{A(\tau)}\\
A(\tau)&0\end{array} \right)X(\tau,k,A)d\tau\,.
\]
Taking the norm of both sides and, recalling that $\Im k\ge 0$, we get
\[
\|X(x,k,A)\|\le 1+C\int_0^x|A(\tau)|\|X(\tau,k,A)\|d\tau\,.
\]
Iterating this inequality, one has 
\[
\|X(x,k,A)\|\le \exp\left(C\int_0^x|A(\tau)|d\tau\right)
\]
and the lemma follows.
\end{proof}
\end{lemma}

\bibliographystyle{plain} 
\bibliography{bibfile}

@article {kov1,
    AUTHOR = {Kova\v{c}, Vjekoslav and Oliveira e Silva, Diogo and Rup\v{c}i\'c, Jelena},
     TITLE = {A sharp nonlinear {H}ausdorff-{Y}oung inequality for small
              potentials},
   JOURNAL = {Proc. Amer. Math. Soc.},
  FJOURNAL = {Proceedings of the American Mathematical Society},
    VOLUME = {147},
      YEAR = {2019},
    NUMBER = {1},
     PAGES = {239--253},
      ISSN = {0002-9939,1088-6826},
   MRCLASS = {42A38 (34L25)},
  MRNUMBER = {3876746},
MRREVIEWER = {Valeri\ S.\ Serov},
       DOI = {10.1090/proc/14268},
       URL = {https://doi-org.ezproxy.library.wisc.edu/10.1090/proc/14268},
}

@article {BD,
    AUTHOR = {Bessonov, Roman V. and Denisov, Sergey A.},
     TITLE = {Sobolev norms of {$L^2$}-solutions to the nonlinear
              {S}chr\"odinger equation},
   JOURNAL = {Pacific J. Math.},
  FJOURNAL = {Pacific Journal of Mathematics},
    VOLUME = {331},
      YEAR = {2024},
    NUMBER = {2},
     PAGES = {217--258},
      ISSN = {0030-8730,1945-5844},
   MRCLASS = {35Q55 (35P25)},
  MRNUMBER = {4817499},
       DOI = {10.2140/pjm.2024.331.217},
       URL = {https://doi-org.ezproxy.library.wisc.edu/10.2140/pjm.2024.331.217},
}

@article {ck,
    AUTHOR = {Christ, M. and Kiselev, A.},
     TITLE = {Scattering and wave operators for one-dimensional
              {S}chr\"odinger operators with slowly decaying nonsmooth
              potentials},
   JOURNAL = {Geom. Funct. Anal.},
  FJOURNAL = {Geometric and Functional Analysis},
    VOLUME = {12},
      YEAR = {2002},
    NUMBER = {6},
     PAGES = {1174--1234},
      ISSN = {1016-443X,1420-8970},
   MRCLASS = {47A40 (34L25 34L40 47B80 47E05 47N50 81U99)},
  MRNUMBER = {1952927},
MRREVIEWER = {Christoph\ M.\ Thiele},
       DOI = {10.1007/s00039-002-1174-9},
       URL = {https://doi-org.ezproxy.library.wisc.edu/10.1007/s00039-002-1174-9},
}

@article {DKr,
    AUTHOR = {Denisov, Sergey A.},
     TITLE = {Continuous analogs of polynomials orthogonal on the unit
              circle and {K}re\u in systems},
   JOURNAL = {IMRS Int. Math. Res. Surv.},
  FJOURNAL = {IMRS. International Mathematics Research Surveys},
      YEAR = {2006},
     PAGES = {Art. ID 54517, 148},
      ISSN = {1687-1308,1687-1324},
   MRCLASS = {42C05 (30C10 34L05 47B36 47E05)},
  MRNUMBER = {2280219},
MRREVIEWER = {Leonid\ B.\ Golinski\u i},
}

@article {Den25,
    AUTHOR = {Denisov, Sergey A.},
     TITLE = {Two quantitative versions of the nonlinear {C}arleson
              conjecture},
   JOURNAL = {C. R. Math. Acad. Sci. Paris},
  FJOURNAL = {Comptes Rendus Math\'ematique. Acad\'emie des Sciences. Paris},
    VOLUME = {363},
      YEAR = {2025},
     PAGES = {1533--1541},
      ISSN = {1631-073X,1778-3569},
   MRCLASS = {42C05 (30H35)},
  MRNUMBER = {5001309},
       DOI = {10.5802/crmath.806},
       URL = {https://doi-org.ezproxy.library.wisc.edu/10.5802/crmath.806},
}

@article {dm,
    AUTHOR = {Denisov, Sergey and Mohamed, Liban},
     TITLE = {Generalizations of {M}enchov-{R}ademacher theorem and
              existence of wave operators in {S}chr\"odinger evolution},
   JOURNAL = {Canad. J. Math.},
  FJOURNAL = {Canadian Journal of Mathematics. Journal Canadien de
              Math\'ematiques},
    VOLUME = {73},
      YEAR = {2021},
    NUMBER = {2},
     PAGES = {360--382},
      ISSN = {0008-414X,1496-4279},
   MRCLASS = {34L25 (34L40 35J10 42C05 47A40)},
  MRNUMBER = {4230378},
MRREVIEWER = {Serge\ C.\ Richard},
       DOI = {10.4153/s0008414x19000646},
       URL = {https://doi-org.ezproxy.library.wisc.edu/10.4153/s0008414x19000646},
}

@book {FD,
    AUTHOR = {Faddeev, Ludwig D. and Takhtajan, Leon A.},
     TITLE = {Hamiltonian methods in the theory of solitons},
    SERIES = {Classics in Mathematics},
     NOTE = {Translated from the 1986 Russian original by Alexey G. Reyman},
 PUBLISHER = {Springer, Berlin},
      YEAR = {2007},
     PAGES = {x+592},
      ISBN = {978-3-540-69843-2},
   MRCLASS = {37K10 (35P25 35Q51 35Q55 35R30 37J35 37N20 81R12)},
  MRNUMBER = {2348643},
}

@article {kls,
    AUTHOR = {Kiselev, Alexander and Last, Yoram and Simon, Barry},
     TITLE = {Modified {P}r\"ufer and {EFGP} transforms and the spectral
              analysis of one-dimensional {S}chr\"odinger operators},
   JOURNAL = {Comm. Math. Phys.},
  FJOURNAL = {Communications in Mathematical Physics},
    VOLUME = {194},
      YEAR = {1998},
    NUMBER = {1},
     PAGES = {1--45},
      ISSN = {0010-3616,1432-0916},
   MRCLASS = {34L40 (39A70 47B39 47B80 47E05 81Q10)},
  MRNUMBER = {1628290},
MRREVIEWER = {G\"unter\ Stolz},
       DOI = {10.1007/s002200050346},
       URL = {https://doi-org.ezproxy.library.wisc.edu/10.1007/s002200050346},
}

@article {kov,
    AUTHOR = {Kova\v{c}, Vjekoslav and Oliveira e Silva, Diogo and Rup\v{c}i\'c, Jelena},
     TITLE = {Asymptotically sharp discrete nonlinear {H}ausdorff-{Y}oung
              inequalities for the {$\rm SU(1,1)$}-valued {F}ourier
              products},
   JOURNAL = {Q. J. Math.},
  FJOURNAL = {The Quarterly Journal of Mathematics},
    VOLUME = {73},
      YEAR = {2022},
    NUMBER = {3},
     PAGES = {1179--1188},
      ISSN = {0033-5606,1464-3847},
   MRCLASS = {42A05 (42C05 43A25)},
  MRNUMBER = {4479843},
       DOI = {10.1093/qmath/haac011},
       URL = {https://doi-org.ezproxy.library.wisc.edu/10.1093/qmath/haac011},
}

@article {musc,
    AUTHOR = {Muscalu, Camil and Tao, Terence and Thiele, Christoph},
     TITLE = {A counterexample to a multilinear endpoint question of
              {C}hrist and {K}iselev},
   JOURNAL = {Math. Res. Lett.},
  FJOURNAL = {Mathematical Research Letters},
    VOLUME = {10},
      YEAR = {2003},
    NUMBER = {2-3},
     PAGES = {237--246},
      ISSN = {1073-2780},
   MRCLASS = {34L40 (47E05)},
  MRNUMBER = {1981900},
MRREVIEWER = {Alexander\ M.\ Gomilko},
       DOI = {10.4310/MRL.2003.v10.n2.a10},
       URL = {https://doi-org.ezproxy.library.wisc.edu/10.4310/MRL.2003.v10.n2.a10},
}

@article {st,
    AUTHOR = {Oberlin, Richard and Seeger, Andreas and Tao, Terence and
              Thiele, Christoph and Wright, James},
     TITLE = {A variation norm {C}arleson theorem},
   JOURNAL = {J. Eur. Math. Soc. (JEMS)},
  FJOURNAL = {Journal of the European Mathematical Society (JEMS)},
    VOLUME = {14},
      YEAR = {2012},
    NUMBER = {2},
     PAGES = {421--464},
      ISSN = {1435-9855,1435-9863},
   MRCLASS = {42A20 (42A16 42A45 42A61)},
  MRNUMBER = {2881301},
MRREVIEWER = {Alexander\ V.\ Tovstolis},
       DOI = {10.4171/JEMS/307},
       URL = {https://doi-org.ezproxy.library.wisc.edu/10.4171/JEMS/307},
}

@article {tt,
    AUTHOR = {Tao, Terence and
              Thiele, Christoph},
     TITLE = {Nonlinear {F}ourier {A}nalysis},
   JOURNAL = {IAS/Park City Graduate Summer School, unpublished lecture notes},
      YEAR = {2012},
       URL = {arxiv:1201.5129},
}
\end{document}